\theoremstyle{plain}
\newtheorem{thm}{Theorem}[section]
\newtheorem{cor}[thm]{Corollary}
\newtheorem{lem}[thm]{Lemma}
\newtheorem{prop}[thm]{Proposition}
\theoremstyle{definition}
\newtheorem{defn}[thm]{Definition}
\theoremstyle{remark}
\numberwithin{equation}{section}
\newcommand{\N}{\mathbb N}
\newcommand{\Z}{\mathbb Z}
\newcommand{\R}{\mathbb R}
\newcommand{\E}{\mathcal{E}}
\newcommand{\e}{\varepsilon}
\renewcommand{\l}{\ell}
\newcommand{\la}{\lambda}
\title[Measurable rigidity of the cohomological equation]{Measurable rigidity of the cohomological equation for linear cocycles over hyperbolic systems}
\author{Clark Butler}
\thanks{This material is based upon work supported by the National Science Foundation Graduate Research Fellowship under Grant \# DGE-1144082. }
\begin{document}

\begin{abstract}
We show that any measurable solution of the cohomological equation for a H\"older  linear cocycle over a hyperbolic system coincides almost everywhere with a H\"older  solution.  More generally, we show that every measurable invariant conformal structure for a H\"older linear cocycle over a hyperbolic system coincides almost everywhere with a continuous invariant conformal structure. We also use the main theorem to show that a linear cocycle is conformal if none of its iterates preserve a measurable family of proper subspaces of $\mathbb{R}^{d}$. We use this to characterize closed negatively curved Riemannian manifolds of constant negative curvature by irreducibility of the action of the geodesic flow on the unstable bundle. 
\end{abstract}
\maketitle

\section{Introduction}
This work is motivated by a pair of questions, one of which is sourced from the geometry of negatively curved manifolds and the other from the study of the cohomology of linear cocycles over hyperbolic systems. We will first explain how the central difficulty of these two motivating questions can be viewed as two different manifestations of the same problem regarding continuity of measurable invariant conformal structures for linear cocycles over hyperbolic systems. 

The geometric question was originally posed by Sullivan \cite{Su}, who conjectured that if the action of the geodesic flow of a closed negatively curved Riemannian manifold $M$ (with $\dim M \geq 3$) on the tangent spaces to expanding horospheres is \emph{measurably irreducible} and the sectional curvatures of $M$ satisfy the $1/4$-pinching condition $-\frac{1}{4} > K \geq -1$ then $M$ has constant negative curvature. We let $E^{u}$ denote the unstable bundle of the geodesic flow $g^{t}$ on $T^{1}M$ which is tangent to the expanding horospheres of $M$ and we let $m$ denote the invariant Liouville volume for the geodesic flow. Measurable irreducibility in this context then means that there is no measurable family of proper subspaces of $E^{u}$ which are invariant $m$-a.e. under the derivative action of the geodesic flow. This conjecture was confirmed by combining the work of Kanai \cite{Kan1} and Besson-Courtois-Gallot \cite{BCG1}, as noted by Yue \cite{Yue}.

The $1/4$-pinching assumption on the sectional curvatures of $M$ appears superfluous at first glance in the hypotheses of this theorem, as one does not require $1/4$-pinching to formulate the notion of measurable irreducibility of the geodesic flow on expanding horospheres, and the conclusion of the theorem shows that the curvature of the manifold is actually constant. This was also observed by Yue, who proposed a proof that the measurable irreducibility hypothesis alone suffices to conclude that the curvature of $M$ is constant \cite{Yue}. However there appears to be a crucial gap in this proof which is discussed in detail in \cite[Remark 3.2]{Bu1}. In our first theorem we close this gap and remove the $1/4$-pinching hypothesis,  

\begin{thm}\label{hyperbolic rigidity}
Let $g^{t}$ be the geodesic flow of a closed negatively curved Riemannian manifold $M$ with $\dim M \geq 3$. If $Dg^{t}|_{E^{u}}$ is measurably irreducible then $M$ has constant negative curvature. 
\end{thm}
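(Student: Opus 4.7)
\emph{Proof plan.} The plan is to derive the theorem by combining the paper's main cocycle result with classical rigidity theorems for uniformly quasiconformal geodesic flows. Since $M$ is closed and negatively curved, the geodesic flow $g^t$ on $T^1M$ is Anosov, so the unstable bundle $E^u$ is H\"older continuous and $Dg^t|_{E^u}$ is a H\"older linear cocycle over a hyperbolic flow. Via a Markov section for $g^t$, one recodes this cocycle as a H\"older cocycle over a subshift of finite type, which is the setting of the paper's main theorem.

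The central step is to invoke the corollary stated in the abstract: a H\"older linear cocycle over such a base is uniformly quasiconformal whenever no iterate preserves a measurable family of proper subspaces. The hypothesis of measurable irreducibility of $Dg^t|_{E^u}$ rules this out for the continuous flow, and a standard averaging/intersection argument then promotes this to irreducibility of iterates of the discrete symbolic base: a measurable family of proper subspaces invariant under the time-$n_0$ map would, under the continuous flow action, produce a flow-invariant measurable family (of possibly varying dimension on a partition of the base), contradicting irreducibility. Consequently $Dg^t|_{E^u}$ preserves a continuous invariant conformal structure on $E^u$, i.e., $g^t$ is uniformly quasiconformal on its unstable bundle.

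With uniform quasiconformality in hand, I appeal to rigidity theorems for such flows. Results of Kanai \cite{Kan1} and Yue \cite{Yue}, together with the author's earlier work \cite{Bu1}, show that a closed negatively curved manifold of dimension at least three whose geodesic flow is uniformly quasiconformal on $E^u$ is locally symmetric of rank one. The non-constant-curvature rank-one symmetric spaces (complex, quaternionic, and octonionic hyperbolic) each carry a canonical smooth $Dg^t$-invariant proper subbundle of $E^u$ arising from their algebraic (Hermitian/quaternionic/Cayley) structure, which contradicts measurable irreducibility. Therefore $M$ must be real hyperbolic, i.e., of constant negative curvature.

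The principal obstacle is the first, cocycle-theoretic step: promoting a measurable invariant conformal structure to a continuous one. Yue's original approach required the $1/4$-pinching condition in order to obtain sufficient transverse regularity of the unstable holonomies; the paper's main theorem is designed precisely to remove this regularity hypothesis and close the gap identified in \cite[Remark 3.2]{Bu1}.
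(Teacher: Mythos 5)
Your plan is essentially the paper's proof through all of the substantive steps: recode $Dg^{t}|_{E^{u}}$ as a H\"older cocycle over a subshift via a Markov section for the Anosov flow, transfer measurable irreducibility from the flow to the symbolic cocycle by flow-saturating a putative invariant family of subspaces, invoke Corollary \ref{irreducible} (which rests on Zimmer amenable reduction plus Theorem \ref{conformalrigidity}) to produce a continuous invariant conformal structure, and push it back to conclude uniform quasiconformality of $Dg^{t}|_{E^{u}}$ (the paper does the push-back carefully, via uniform comparability of $\hat\eta$ with the Riemannian norm on the suspension and density of $h(\Omega)$ in $T^{1}M$; your write-up glosses this but it is routine). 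Where you diverge is the endgame: the paper does not pass through a ``locally symmetric of rank one'' classification at all. It observes that uniform quasiconformality forces the complex eigenvalues of $Dg^{\ell(p)}|_{E^{u}_{p}}$ at every periodic orbit to have equal modulus, and then cites \cite[Theorem 1.1]{Bu1} to conclude constant negative curvature directly, with no need to exclude the complex/quaternionic/octonionic hyperbolic cases. Your detour is logically fine, but be careful with the attribution: Kanai's theorem \cite{Kan1} needs the $1/4$-pinching (and extra regularity) hypotheses, and the quasiconformality-to-rigidity portion of \cite{Yue} is exactly the territory where gaps were identified, so the black box you invoke is really carried by \cite{Bu1} alone --- which already gives constant curvature, making your final exclusion of the non-constant-curvature rank-one symmetric spaces (correct as it is, via their canonical $Dg^{t}$-invariant subbundles of $E^{u}$) redundant. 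One more point worth tightening in either approach: Corollary \ref{irreducible} requires that \emph{no iterate} $A^{n_{0}}$ preserve a measurable family of proper subspaces, and for $n_{0}>1$ the flow-saturation $\hat V_{(x,t)}=\hat A^{t}(L(V_{x}))$ is only well defined over the suspension of $f^{n_{0}}$, so the transferred object on $T^{1}M$ is a priori a finitely-multivalued invariant family; your ``averaging/intersection'' remark points at the right fix, but it deserves an actual argument rather than the phrase ``standard.''
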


Our second motivating question comes from the study of the cohomology of linear cocycles over hyperbolic systems. We will take subshifts of finite type as our models for hyperbolic systems. For this we recall some standard definitions. Let $\l \geq 2$ be a fixed positive integer, let $Q = (q_{ij})_{1 \leq i,j \leq r }$ be an $\l \times \l$ matrix with $q_{ij} \in \{0,1\}$ and let $\Sigma$ be the \emph{subshift of finite type} associated to the matrix $Q$,
\[
\Sigma = \{(x_{n})_{n \in \mathbb{Z}}: q_{x_{n}x_{n+1}} = 1 \;\text{for all $n \in \mathbb{Z}$}\}. 
\] 
We let $f: \Sigma\circlearrowleft$ be the left shift map defined by $f(x_{n})_{n \in \mathbb{Z}} = (x_{n+1})_{n \in \mathbb{Z}}$. We equip $\Sigma$ with the family of metrics,
\begin{equation}\label{metric def}
\rho_{\tau}(x,y)=e^{-\tau N(x,y)},\; \textrm{where} \; N(x,y)=\max \lbrace N\geq 0; x_n=y_n \; \textrm{for all} \;|n|<N \rbrace,
\end{equation}
where $\tau \in (0,\infty)$. Let $d \geq 1$ be a fixed positive integer.
\begin{defn}\label{defn: cocycle}
Let $A: \Sigma \rightarrow GL(d,\R)$ be a measurable map. The \emph{linear cocycle} over $f$ generated by $A$ is the map
\[
\mathcal{A}: \Sigma \times \Z \rightarrow GL(d,\R),
\]
\[
\mathcal{A}(x,n) = \left\{
	\begin{array}{ll}
		A(f^{n-1}(x))\ldots A(f(x))A(x)  & \mbox{if } n>0 \\
		I & \mbox{if } n=0 \\
		A(f^{n}(x))^{-1}\ldots A(f^{-1}(x))^{-1}& \mbox{if } n<0. \\
	\end{array}
\right.
\]
\end{defn}
We will write $A^{n}(x):=\mathcal{A}(x,n)$. We say that $\mathcal{A}$ is a continuous linear cocycle if the generator $A$ is continuous; we say that $\mathcal{A}$ is $\alpha$-H\"older continuous if the generator $A$ is $\alpha$-H\"older continuous with respect to the metric $\rho_{\tau}$ for a fixed choice of $\tau$. Note that it then follows that $A$ is H\"older continuous with respect to $\rho_{\tau'}$ for any other $\tau' > 0$, with a possibly different H\"older exponent. 

Lastly, let $\mu$ be a fixed ergodic, fully supported $f$-invariant probability measure on $\Sigma$ with local product structure. Local product structure will be defined in Section \ref{shift definitions}; natural examples of measures with local product structure are equilibrium states of H\"older potentials \cite{Bow}, such as the measure of maximal entropy. 

\begin{defn}\label{defn: cohomologous} Two linear cocycles $A$ and $B$ over $f$ are (measurably, continuously) \emph{cohomologous} if there is a (measurable, continuous) function $P: \Sigma \rightarrow GL(d,\R)$ such that
\[
A(x) = P(f(x))B(x)P(x)^{-1},
\]
for $\mu$-a.e. $x \in \Sigma$ in the measurable case, and every $x \in \Sigma$ in the continuous case. 
\end{defn}

Our second main theorem states that if a H\"older continuous linear cocycle $\mathcal{A}$ over $f$ is measurably cohomologous to the identity cocycle $B(x) \equiv  Id_{\R^{d}}$ then $A$ is continuously cohomologous to the identity.
 
\begin{thm}\label{coboundary}
Let $\mathcal{A}$ be an $\alpha$-H\"older continuous cocycle over $f$.   Suppose that there is a measurable function $P : \Sigma \rightarrow GL(d,\R)$ such that 
\[
A(x) = P(f(x))P(x)^{-1}, \; \; \text{for $\mu$-a.e. $x \in \Sigma$.}
\]
Then $P$ coincides $\mu$-a.e. with an $\alpha$-H\"older continuous function $\hat{P}$ satisfying the same equation for every $x \in \Sigma$.  
\end{thm}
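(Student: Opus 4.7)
My plan is to convert the measurable coboundary data into a measurable invariant family of inner products, upgrade the associated conformal class to a continuous one using the main theorem of the paper, and then resolve in turn the scalar and orthogonal ambiguities via Livšic-type rigidity.

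Set $C(x) := P(x)^{-T} P(x)^{-1}$, a measurable field of inner products on $\R^d$. Substituting $P(f(x)) = A(x) P(x)$ yields $A(x)^T C(f(x)) A(x) = C(x)$ for $\mu$-a.e.\ $x$, so $C$ is a measurable invariant inner product and $[C]$ is a measurable invariant conformal structure for $\mathcal{A}$. Applying the main theorem of the paper on continuity of measurable invariant conformal structures, I obtain a Hölder continuous invariant conformal structure $[\tilde C]$ coinciding with $[C]$ $\mu$-a.e. Choosing the representative $\tilde C$ normalized by $\det \tilde C \equiv 1$, the invariance reads $A(x)^T \tilde C(f(x)) A(x) = \lambda(x) \tilde C(x)$, where taking determinants forces $\lambda(x) = |\det A(x)|^{2/d}$, which is Hölder continuous and strictly positive.

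Next write $C = \phi \tilde C$ $\mu$-a.e.\ for a measurable $\phi > 0$; comparing the invariance relations for $C$ and $\tilde C$ yields the cohomological equation
\[
\log \phi(x) - \log \phi(f(x)) = \log \lambda(x)
\]
$\mu$-a.e., with $\log \lambda$ Hölder continuous. By the classical measurable Livšic theorem for $\R$-valued Hölder cocycles over $(f,\mu)$ (which applies because $\mu$ has local product structure and full support), $\log \phi$ agrees $\mu$-a.e.\ with a Hölder continuous function. Consequently $\hat C := \phi \tilde C$ is a Hölder continuous, fully (not merely conformally) invariant inner product agreeing with $C$ $\mu$-a.e.

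Finally, define the Hölder function $\hat P_0(x) := \hat C(x)^{-1/2}$ as the positive definite square root; it satisfies $\hat P_0 \hat P_0^T = \hat C^{-1}$. Since $P P^T = C^{-1} = \hat C^{-1}$ $\mu$-a.e., we have $P(x) = \hat P_0(x) O(x)$ $\mu$-a.e.\ for some measurable $O: \Sigma \to O(d)$. Substituting into $A(x) = P(f(x)) P(x)^{-1}$ shows that the Hölder continuous cocycle
\[
\tilde A(x) := \hat P_0(f(x))^{-1} A(x) \hat P_0(x)
\]
equals $O(f(x)) O(x)^{-1}$ $\mu$-a.e.; since $O(d)$ is closed and $\mu$ has full support, $\tilde A$ is $O(d)$-valued on all of $\Sigma$. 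Thus $\tilde A$ is a Hölder $O(d)$-valued cocycle that is a measurable coboundary, and the Livšic rigidity theorem for Hölder cocycles into compact groups produces a Hölder $\hat O: \Sigma \to O(d)$ with $O = \hat O$ $\mu$-a.e. Setting $\hat P := \hat P_0 \hat O$ then gives a Hölder continuous function that agrees with $P$ $\mu$-a.e.\ and satisfies $A(x) = \hat P(f(x)) \hat P(x)^{-1}$ for every $x \in \Sigma$, by continuity together with density of the support of $\mu$. The central step, and the main obstacle, is the continuity upgrade of the conformal structure via the main theorem of the paper; the remaining Livšic-type postprocessing, first for the scalar factor and then for the orthogonal factor, is standard.
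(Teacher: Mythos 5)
Your proposal is correct, and its crucial first step is the same as the paper's: you build the invariant measurable conformal structure from $P$ (your $[P(x)^{-T}P(x)^{-1}]$ is exactly the class $P[\langle,\rangle]$ used in the paper) and upgrade it to a H\"older invariant conformal structure via Theorem~\ref{conformalrigidity}. Where you diverge is in how you finish. The paper observes that the upgraded conformal structure makes $\mathcal{A}$ uniformly quasiconformal and then concludes in one stroke by citing Sadovskaya's theorem on measurable conjugacies between uniformly quasiconformal H\"older cocycles \cite[Theorem 2.7]{S15} with $B\equiv \mathrm{Id}$. You instead factor the remaining ambiguity explicitly: the scalar (conformal) factor is resolved by the classical measurable Liv\v{s}ic theorem applied to $\log\phi - \log\phi\circ f = \log\lambda$ with $\lambda = |\det A|^{2/d}$, and the orthogonal factor by the measurable Liv\v{s}ic theorem for H\"older cocycles into the compact group $O(d)$, after checking (correctly, via full support of $\mu$ and closedness of $O(d)$) that $\tilde A = \hat P_0(f(x))^{-1}A(x)\hat P_0(x)$ is everywhere orthogonal. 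This buys a more transparent, structurally explicit argument built from classical ingredients rather than the full quasiconformal-conjugacy theorem; the trade-off is that your two black boxes are of essentially the same nature as the paper's single one — in the setting of an ergodic, fully supported measure with local product structure, the $O(d)$-valued measurable rigidity you invoke is precisely the bounded (hence uniformly quasiconformal) special case of \cite[Theorem 2.7]{S15}, or of Pollicott--Walkden/Parry--Pollicott type results — so the external input is comparable, just packaged differently. Your continuity-plus-density arguments for promoting a.e.\ identities to everywhere identities are handled correctly.
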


The equation $A(x) = P(f(x))P(x)^{-1}$ is referred to as the \emph{cohomological equation} over $f$;  Theorem \ref{coboundary} states that, under the hypothesis that $A$ is H\"older continuous, any measurable solution of this equation coincides $\mu$-a.e. with a H\"older continuous solution. 

We pause to give some background on previous work relating to Theorem \ref{coboundary}. There are two principal questions regarding the cohomology of linear cocycles which are still not completely understood. For both questions below we take $A$ and $B$ to be $\alpha$-H\"older continuous for some $\alpha > 0$. 

\begin{enumerate}
\item Suppose that $A$ and $B$ have conjugate periodic data in the sense that there is a continuous map $C: \Sigma \rightarrow GL(d,\R)$ such that $A^{m}(p) = C(p)B^{m}(p)C(p)^{-1}$ for every periodic point $p$ of $f$ of period $m$. Are $A$ and $B$ continuously cohomologous?

\item Suppose that $A$ and $B$ are measurably cohomologous. Are they then continuously cohomologous?
\end{enumerate}

The answers to these two questions are ``yes" in the case where $d = 1$, i.e., when $A$ and $B$ can be viewed as additive real-valued cocycles. This is due to the pioneering work of Livsic \cite{Liv}. When $A$ and $B$ take values in a non-abelian group, the answers to these questions become considerably more subtle. In the case of Question (1) when $B(x)  =  Id_{\R^{d}}$ for every $x \in \Sigma$ a positive answer was obtained by Kalinin \cite{Kal}; we refer to that paper for a survey of previous work on that subject. For more general $B$ a positive answer to Question (1) has also been obtained simultaneously by Backes \cite{B15} and Sadovskaya \cite{S15} under the additional assumption that $B$ is \emph{fiber bunched} (see Definition \ref{fiber} below). The fiber bunching condition holds in an open neighborhood of the identity cocycle. Question (1) remains open in its full generality. 

As for Question (2), Sadovskaya showed in the case that $A$ is fiber bunched and $B \equiv  Id_{\R^{d}}$ that if $A$ and $B$ are measurably cohomologous then they are continuously cohomologous \cite{S15}. Our Theorem \ref{coboundary} improves on her result by removing the fiber bunching condition on $A$. There are examples of H\"older continuous cocycles $A$ and $B$ arbitrarily close to the identity which are measurably cohomologous but not continuously cohomologous that have been constructed by Pollicott and Walkden \cite{PW01} (see also \cite{S15}). It's thus unclear whether a general theorem is possible in the case of Question (2). 

We now introduce the main theorem of this paper from which both Theorems \ref{hyperbolic rigidity} and \ref{coboundary} will be deduced.  A \emph{conformal structure} on $\R^{d}$ is an equivalence class of inner products on $\R^{d}$, where two inner products $\langle , \rangle$ and $(,)$ are considered equivalent if there is a positive real number $\la > 0$ such that $\langle v,w \rangle = \la (v,w)$ for every $v,w \in \R^{d}$. We denote the space of conformal structures on $\R^{d}$ by $\mathcal{C}^{d}$. The space $\mathcal{C}^{d}$ can be naturally identified with the Riemannian symmetric space $SL(d,\R)/SO(d,\R)$ of real positive definite symmetric matrices of determinant 1 which carries a left-invariant nonpositively curved Riemannian metric $g$. $GL(d,\R)$ acts transitively on $\mathcal{C}^{d}$ via the action 
\[
B_{*}\eta = \det(B B^{T})^{-\frac{2}{d}} B^{T} \eta B
\]
for $B \in GL(d,\R)$ and a positive definite matrix $\eta \in SL(d,\R)/SO(d,\R)$, where $B^{T}$ denotes the transpose of $B$. This action is an isometry with respect to the metric $g$. For more details and proofs related to these facts, see \cite{KS10}. For a matrix $B \in GL(d,\R)$ and a positive definite matrix $\eta \in  SL(d,\R)/SO(d,\R)$ we define 
\[
B[\eta] = B^{-1}_{*}\eta
\]
We let $\langle,\rangle$ denote the standard Euclidean inner product on $\R^{d}$ and $\|\cdot \|$ the associated norm. 

\begin{defn}\label{defn: preserve conformal}
Let $\mathcal{A}$ be a linear cocycle over $f$. We say that $\mathcal{A}$ preserves a measurable conformal structure if there is a measurable map $\eta : \Sigma \rightarrow \mathcal{C}^{d}$ defined $\mu$-a.e. such that 
\[
A(x)[\eta_{x}] = \eta_{f(x)},
\]
for $\mu$-a.e. $x \in \Sigma$. We say that $\mathcal{A}$ preserves a continuous conformal structure if $\eta$ is continuous and the above equality holds for every $x \in \Sigma$. 
\end{defn}

We can now state the main theorem of this paper, 

\begin{thm}\label{conformalrigidity}
Let $\mathcal{A}$ be an $\alpha$-H\"older continuous linear cocycle over $f$. If $\mathcal{A}$ preserves a measurable conformal structure $\eta: \Sigma \rightarrow \mathcal{C}^{d}$ then $\eta$ coincides $\mu$-a.e. with an $\alpha$-H\"older continuous conformal structure $\hat{\eta}: \Sigma \rightarrow \mathcal{C}^{d}$ which is invariant under $\mathcal{A}$. 
\end{thm}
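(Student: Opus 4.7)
The plan is to construct canonical $\alpha$-H\"older stable and unstable holonomies for $\mathcal{A}$ using the invariant conformal structure, and then to use these holonomies together with the local product structure of $\mu$ to extend $\eta$ to a continuous invariant conformal structure $\hat\eta$. The guiding principle is that an invariant measurable conformal structure is a substitute for fiber bunching: writing $\eta_{x} = g_{x}$ for a positive definite matrix of determinant one, the invariance $A(x)[\eta_{x}] = \eta_{f(x)}$ translates to
\[
A(x)^{T} g_{f(x)} A(x) = \lambda(x)\, g_{x}, \qquad \lambda(x) = |\det A(x)|^{2/d},
\]
with $\lambda$ already $\alpha$-H\"older continuous. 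Thus $A(x)$ is a scaled isometry in the measurable gauge defined by $\eta$, and its conformal distortion in the Euclidean gauge is controlled by $\|g_{x}\|$ and $\|g_{f(x)}\|$. This is the measurable fiber-bunching that must be promoted to a uniform/H\"older statement.

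The first main step is to construct stable and unstable holonomies. For $y \in W^{s}_{\mathrm{loc}}(x)$ the natural candidate is $H^{s}_{x,y} = \lim_{n \to \infty} A^{n}(y)^{-1} A^{n}(x)$. I would pick by Lusin's theorem a compact set $K$ of large $\mu$-measure on which $\eta$ is uniformly continuous (hence $\|g\|$ is uniformly bounded), and use recurrence of $\mu$-typical orbits to $K$ to control the non-scalar part of $A^{n}(x)$ in the Euclidean gauge. Combined with the exponential H\"older decay of $\rho_{\tau}(f^{k}(x), f^{k}(y))$ and the H\"older regularity of $\lambda$, a telescoping estimate should show that the partial products are Cauchy with an $\alpha$-H\"older bound on $(x,y)$. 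An analogous construction, using the forward shift, yields unstable holonomies $H^{u}_{x,y}$.

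The second step is to verify that $\eta$ is holonomy-equivariant $\mu$-a.e. Here the \emph{isometric} action of $\mathcal{A}$ on the NPC space $\mathcal{C}^{d}$ is the crucial input: the quantity $d_{\mathcal{C}^{d}}(A^{n}(x)[\eta_{x}], A^{n}(x) H^{s}_{x,y}[\eta_{y}])$ is constant in $n$, and by the intertwining relation $A^{n}(x) H^{s}_{x,y} = H^{s}_{f^{n}(x),f^{n}(y)} A^{n}(y)$ it equals $d_{\mathcal{C}^{d}}(\eta_{f^{n}(x)}, H^{s}_{f^{n}(x),f^{n}(y)}[\eta_{f^{n}(y)}])$. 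Recurrence of the pair $(f^{n}(x), f^{n}(y))$ to $K$, together with the H\"older bound on $H^{s}$ and uniform continuity of $\eta$ on $K$, forces this common distance to vanish, so $H^{s}_{x,y}[\eta_{y}] = \eta_{x}$ almost surely. The argument for $H^{u}$ is analogous. Then, using the local product structure, I would fix a base point $x_{0}$ and define
\[
\hat\eta_{x} = (H^{u}_{z,x})\bigl[(H^{s}_{x_{0},z})[\eta_{x_{0}}]\bigr]
\]
for $z \in W^{s}_{\mathrm{loc}}(x_{0}) \cap W^{u}_{\mathrm{loc}}(x)$, which is $\alpha$-H\"older by construction, agrees with $\eta$ a.e.~by equivariance, and is $\mathcal{A}$-invariant because the holonomies intertwine with the cocycle.

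The main obstacle, I expect, is the holonomy construction in the first step. Without a uniform fiber bunching hypothesis the growth rates of $\|A^{n}(x)\|$ and $\|A^{n}(x)^{-1}\|$ are only controlled in the measurable $\eta$-gauge, and transferring this control to the fixed Euclidean gauge where H\"older estimates must hold requires a careful interplay between recurrence to Lusin sets, the H\"older structure of $\lambda$, and the NPC geometry of $\mathcal{C}^{d}$ (where distances between conformal structures can be bounded by logarithms of singular values of the intertwining matrices). Making this quantitative, and in particular showing the holonomies extend continuously from a full-measure set to all of $\Sigma$, is the subtle point of the argument.
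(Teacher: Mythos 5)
Your first two steps (existence of holonomies on large-measure sets and $\mu$-a.e.\ holonomy invariance of $\eta$ via Lusin sets, recurrence, and the isometric action on the NPC space $\mathcal{C}^{d}$) do track the beginning of the actual argument, but the proposal has a genuine gap at the point where you pass from almost-everywhere information to a global definition of $\hat\eta$. The telescoping/recurrence argument you sketch can only give convergence of $H^{s}_{x,y}=\lim_n A^{n}(y)^{-1}A^{n}(x)$ at points whose orbits have subexponential growth of $\|A^{n}\|\,\|A^{n}{}^{-1}\|$ relative to the contraction rate $\tau$ --- i.e.\ on sets of the form $\mathcal{D}(N,\theta)$ with $\theta<\tau$, which cover $\Sigma$ only up to a set of $\mu$-measure zero. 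Your formula $\hat\eta_{x}=H^{u}_{z,x}\bigl[H^{s}_{x_{0},z}[\eta_{x_{0}}]\bigr]$ presupposes that the holonomies exist (with uniform H\"older constants) at \emph{every} $x\in\Sigma$, in particular at periodic points $p$ where a priori $\la_{+}(p)-\la_{-}(p)\ge\tau$ and the defining limits can diverge; nothing in the measurable invariance of $\eta$ rules such points out directly, and the recurrence-to-$K$ argument says nothing about them because they carry zero $\mu$-measure. So "$\alpha$-H\"older by construction" and "$\mathcal{A}$-invariant because the holonomies intertwine" are unjustified exactly where the theorem is hard.

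What is missing is the mechanism that converts the a.e.\ statement into a uniform one. In the paper this occupies most of the proof: a local-product-structure argument showing $\mathcal{D}(N,\theta)\subseteq\operatorname{supp}(\mu\,|\,\mathcal{D}(N_{*},\theta_{*}))$ (so that the a.e.-defined $\eta$ extends continuously and equivariantly to each full set $\mathcal{D}(N,\theta)$, not just to a support); the conclusion that any periodic point with exponent gap below $\tau$ has $\la_{+}(p)=0$; a shadowing construction that mixes the orbit of a hypothetical periodic point with $\la_{+}(x)>0$ with one where $\la_{+}(y)=0$ to manufacture periodic points lying in a fixed $\mathcal{D}(N,\theta)$ while $\|A^{u_m}(p^{m})\|\to\infty$, yielding a contradiction; and finally Kalinin's approximation theorem plus the Kalinin--Sadovskaya uniform estimate to conclude $\mathcal{D}(N,\theta)=\Sigma$ for some $\theta<\tau$, i.e.\ uniform fiber bunching, after which your global holonomy construction of $\hat\eta$ becomes legitimate. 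Without some argument controlling the periodic-orbit exponents (or otherwise forcing uniform quasiconformality), the proposal cannot be completed as written.
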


To discuss previous results related to this theorem we introduce the notion of fiber bunching for a linear cocycle, 

\begin{defn}\label{fiber}
A linear cocycle $\mathcal{A}$ over $f: \Sigma \rightarrow \Sigma$ is \emph{fiber bunched} if, for some choice of $\tau > 0$, $\mathcal{A}$ is $\alpha$-H\"older continuous with respect to the metric $\rho_{\tau}$ (as defined in \eqref{metric def}) and there are constants $0 < \xi < 1$ and $L > 0$ such that 
\[
\|A^{n}(x)\|\|A^{n}(x)^{-1}\| e^{-|n|\alpha \tau} < L\xi^{|n|}
\]
for every $n \in \Z$.
\end{defn} 
The quantity $\|A^{n}(x)\|\|A^{n}(x)^{-1}\|$ measures the degree to which $A^{n}$ fails to be conformal with respect to the Euclidean inner product on $\R^{d}$. The fiber bunching assumption implies that the growth rate of this failure of conformality is uniformly dominated by the expansion/contraction rates of the base system. 

Fiber bunching is a common assumption among recent theorems proven about linear cocycles over hyperbolic systems: these include continuity of Lyapunov exponents \cite{BBB}, \cite{BV}, simplicity of Lyapunov exponents \cite{BV04}, cohomology of H\"older continuous cocycles \cite{B15},  \cite{S15}, and characterizations of linear cocycles with vanishing Lyapunov exponents \cite{AV10}. There are examples due to Bocker-Viana and the author \cite{BV}, \cite{Bu2}, illustrating the necessity of the fiber bunching condition for unconditional statements regarding continuity of the Lyapunov exponents, but beyond these examples very little is known about the necessity of the fiber bunching hypothesis for these theorems. 

Without the fiber bunching hypothesis the conclusion of Theorem \ref{conformalrigidity} was also previously known to hold under additional boundedness hypotheses on the $\mathcal{A}$-invariant conformal structure $\eta$. The best previous result in this direction is due to Sadovskaya and de la Llave \cite{LS}, who showed that if the function $x \rightarrow g(\eta_{x}, \langle,\rangle)$ is in $L^{p}(\mu)$ for $p$ sufficiently large then $\eta$ coincides $\mu$-a.e. with a continuous $\mathcal{A}$-invariant conformal structure. 

We remark that all of the results of this paper apply equally well to H\"older continuous cocycles over transitive Anosov diffeomorphisms or Anosov flows where the base measure $\mu$ is taken to be the equilibrium state of a H\"older continuous potential. For transitive Anosov diffeomorphism the reduction to a H\"older continuous cocycle over a subshift of finite type via a Markov partition follows standard techniques. For transitive Anosov flows the reductions to the results of this paper are less trivial but still straightforward. The reductions in that case can be done via the techniques used in the proof of Theorem \ref{hyperbolic rigidity} below. 

 We thank Amie Wilkinson for many helpful discussions regarding the contents of this paper. We also thank the anonymous referee for numerous useful suggestions which greatly improved the quality of the paper. 

%We next derive a series of corollaries from Theorem \ref{conformalrigidity}. The first two corollaries transfer the conclusion of this theorem to cocycles over Axiom A diffeomorphisms and Axiom A flows. 

%Let $M$ be a Riemannian manifold and let $f: M \rightarrow M$ be a transitive $C^{1+\beta}$ Anosov diffeomorphism of $M$. Let $\pi: \E \rightarrow M$ be an $\alpha$-H\"older $d$-dimensional vector bundle over t$M$; this means that $M$ admits a covering by open sets $\{U_{i}\}_{1 \leq i \leq m}$ with trivializations $\Phi_{i}: \pi^{-1}(U_{i}) \rightarrow U_{i} \times \R^{d}$ of the vector bundle $\E$ such that the transition maps
%\[
%\Phi_{i} \circ \Phi_{j}^{-1}: U_{i} \cap U_{j} \times \R^{d} \rightarrow U_{i} \cap U_{j} \times \R^{d},
%\]
%\[
%\Phi_{i} \circ \Phi_{j}^{-1}(p,v) = (p, \varphi_{i}(p) \circ \varphi_{j}(p)^{-1}(v)).
%\]
%have the property that $\varphi_{i}(p) \circ \varphi_{j}(p)^{-1}$ is a linear map for each $p$ and these linear maps depend $\alpha$-H\"older continuously on $p$ in the metric of $M$. A linear cocycle over $f$ is then defined to be a map $F: \E \rightarrow \E$ such that $\pi \circ F = f \circ \pi$ and $F_{p}:\E_{p} \rightarrow \E_{f(p)}$ is a linear isomorphism. 

\section{Reduction to Theorem \ref{conformalrigidity}}\label{reduction}
In this section we will show how Theorem \ref{conformalrigidity} can be used to obtain some corollaries of independent interest which we can use to prove Theorems \ref{hyperbolic rigidity} and \ref{coboundary}. We begin by using Theorem \ref{conformalrigidity} to remove the fiber bunching hypothesis from one of the main theorems of a recent work of Sadovskaya\cite{S15}. 

Following Sadovskaya, a continuous linear cocycle $\mathcal{A}$ with generator $A$ over $f$ is \emph{uniformly quasiconformal} if there is a constant $L > 0$ such that for every $x \in \Sigma$, 
\[
\|A^{n}(x)\| \|A^{n}(x)^{-1}\| \leq L \;\; \text{for every $n \in \Z$.}.
\]
If $\mathcal{A}$ preserves a continuous invariant conformal structure $\eta$ then $\mathcal{A}$ is uniformly quasiconformal: since $\eta$ is continuous and $\Sigma$ is compact there is a constant $C$ such that for all $v \in \R^{d}$ and $x \in \Sigma$, 
\[
C^{-1}\|v\| \leq \sqrt{\eta_{x}(v,v)} \leq C\|v\|.
\]
Since $\eta$ is invariant there is a continuous function $\psi: \Sigma \rightarrow \R$ such that for every $v \in \R^{d}$
\[
\sqrt{\eta_{x}(A(x)v,A(x)v)} = \psi(x)\sqrt{\eta_{x}(v,v)}.
\]
Set $\psi^{n}(x) = \prod_{i=0}^{n-1} \psi(f^{i}(x))$. Putting these inequalities together we compute that 
\[
\|A^{n}(x)v\| \|A^{n}(x)^{-1}v\| \leq C^{2} \psi^{n}(x) \cdot (\psi^{n}(x))^{-1} \cdot \eta_{x}(v,v) \leq C^{4}\|v\|^{2}
\]
Hence $\mathcal{A}$ is uniformly quasiconformal with constant $L = C^{4}$. The reverse implication that a uniformly quasiconformal cocycle preserves a continuous conformal structure also holds when $\mathcal{A}$ is H\"older continuous \cite{KS10}. 

\begin{cor}\label{Livsicmeasurable}
Let $\mathcal{A}$ and $\mathcal{B}$ be $\alpha$-H\"older continuous linear cocycles over $f$ such that $\mathcal{B}$ preserves a measurable conformal structure. Suppose that there is a measurable function $P : \Sigma \rightarrow GL(d,\R)$ such that 
\[
A(x) = P(f(x))B(x)P(x)^{-1}, \; \; \text{for $\mu$-a.e. $x \in \Sigma$.}
\]
Then  both $\mathcal{A}$ and $\mathcal{B}$ preserve an $\alpha$-H\"older continuous conformal structure and $P$ coincides $\mu$-a.e. with an $\alpha$-H\"older continuous function. 
\end{cor}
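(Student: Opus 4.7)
The plan is to apply Theorem \ref{conformalrigidity} twice, once to $\mathcal{B}$ and once to $\mathcal{A}$, and then to reduce the regularity of $P$ to classical Livsic-type theorems via a H\"older change of coordinates that makes both cocycles conformal. The decisive ingredient is the double application of Theorem \ref{conformalrigidity}; once both cocycles carry continuous invariant conformal structures, the remaining argument is largely bookkeeping.

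First, let $\eta_B$ be the given measurable $\mathcal{B}$-invariant conformal structure. Theorem \ref{conformalrigidity} applied to $\mathcal{B}$ produces an $\alpha$-H\"older continuous $\mathcal{B}$-invariant conformal structure $\hat\eta_B$ equal to $\eta_B$ $\mu$-a.e. The cocycle identity $A(x) = P(f(x))B(x)P(x)^{-1}$ together with the homomorphism property $(B_1B_2)[\eta] = B_1[B_2[\eta]]$ of the $GL(d,\R)$-action on $\mathcal{C}^{d}$ shows that the measurable pushforward $\eta_A(x) := P(x)[\hat\eta_B(x)]$ is an $\mathcal{A}$-invariant conformal structure. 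A second application of Theorem \ref{conformalrigidity} produces an $\alpha$-H\"older continuous $\mathcal{A}$-invariant conformal structure $\hat\eta_A$, which already gives the first conclusion of the corollary.

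To address $P$, use the smoothness of the matrix square root on positive definite symmetric matrices to pick H\"older continuous $U_A, U_B \colon \Sigma \to GL(d,\R)$ whose Gramians $U_A(x)^T U_A(x)$ and $U_B(x)^T U_B(x)$ represent $\hat\eta_A(x)$ and $\hat\eta_B(x)$, and set
\[
\tilde A := (U_A\circ f)\, A\, U_A^{-1}, \quad \tilde B := (U_B\circ f)\, B\, U_B^{-1}, \quad \tilde P := U_A\, P\, U_B^{-1}.
\]
Invariance of $\hat\eta_A$ and $\hat\eta_B$ translates into $\tilde A^T\tilde A,\; \tilde B^T\tilde B \in \R_+\cdot I$, so $\tilde A$ and $\tilde B$ are continuous conformal cocycles valued in $\R_+\cdot O(d)$ and are in particular fiber bunched. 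The a.e.\ identity $\hat\eta_A = P[\hat\eta_B]$ forces $\tilde P \in \R_+\cdot O(d)$ $\mu$-a.e., and the cohomology equation $\tilde A = (\tilde P\circ f)\,\tilde B\,\tilde P^{-1}$ follows by direct substitution.

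Finally, decompose $\tilde A = a\cdot R_A$, $\tilde B = b\cdot R_B$, and $\tilde P = \lambda\cdot R_P$ into positive scalar and orthogonal parts ($a,b,R_A,R_B$ H\"older continuous, $\lambda, R_P$ measurable). The conjugation identity splits into
\[
\log a - \log b = \log\lambda\circ f - \log\lambda, \qquad R_A = (R_P\circ f)\,R_B\, R_P^{-1}.
\]
The first equation is the classical Livsic coboundary problem for an $\R$-valued H\"older cocycle, so $\log\lambda$ admits an $\alpha$-H\"older continuous representative by \cite{Liv}. The second is a measurable conjugacy between H\"older continuous $O(d)$-valued cocycles, which are automatically fiber bunched; Sadovskaya's measurable Livsic theorem \cite{S15} (or, equivalently, the classical Livsic theorem for compact-group-valued cocycles) then supplies an $\alpha$-H\"older continuous representative for $R_P$. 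The main subtlety in this last step is to appeal to a measurable Livsic theorem for compact-group-valued cocycles that does not invoke any explicit fiber bunching hypothesis on $A$ or $B$ themselves. Combining the two parts, $\tilde P$ coincides $\mu$-a.e.\ with an $\alpha$-H\"older continuous function, and hence so does $P = U_A^{-1}\,\tilde P\, U_B$.
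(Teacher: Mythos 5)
Your argument is correct, and its first half is exactly the paper's: apply Theorem \ref{conformalrigidity} to the measurable structure preserved by $\mathcal{B}$ and to its pushforward under $P$, which is preserved by $\mathcal{A}$, obtaining $\alpha$-H\"older continuous invariant conformal structures for both cocycles. Where you diverge is the treatment of $P$. The paper simply observes (in the remarks preceding the corollary) that a cocycle preserving a continuous conformal structure is uniformly quasiconformal, and then applies Sadovskaya's measurable rigidity theorem \cite[Theorem 2.7]{S15} directly to $\mathcal{A}$, $\mathcal{B}$ and $P$; no change of frames or decomposition of $P$ is needed. You instead conjugate both cocycles by H\"older square roots of the invariant structures so that they become $\R_{+}\cdot O(d)$-valued, and split the conjugacy into a scalar part, handled by the classical measurable Livsic theorem (valid here since $\mu$ is ergodic, fully supported, with local product structure), and an $O(d)$-valued part. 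This is a legitimate alternative packaging, but note that for the $O(d)$-part the load-bearing reference is still \cite[Theorem 2.7]{S15}: the ``classical'' compact-group Livsic theorems concern coboundaries or periodic data, not measurable conjugacies between two distinct cocycles, so your parenthetical equivalence is loose, and your route does not actually gain independence from that input. In effect you reprove the statement with the same external ingredient plus an extra (unneeded) scalar Livsic step; the paper's shortcut via uniform quasiconformality is shorter and avoids verifying the H\"older selection of square roots, though your normal form does make transparent why the conjugated cocycles are conformal, hence fiber bunched.
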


\begin{proof}
By assumption $\mathcal{B}$ preserves a measurable conformal structure $\eta$. The cocycle $\mathcal{A}$ then preserves the measurable conformal structure $P[\eta]$. By Theorem \ref{conformalrigidity}, both $\eta$ and $P[\eta]$ coincide $\mu$-a.e. with a $\alpha$-H\"older continuous conformal structures $\widehat{\eta}$ and $\widehat{P[\eta]}$ which are $\mathcal{B}$-invariant and $\mathcal{A}$-invariant respectively. It follows from the remarks above that both $\mathcal{A}$ and $\mathcal{B}$ are uniformly quasiconformal and thus satisfy the hypotheses of Sadovskaya's theorem \cite[Theorem 2.7]{S15} from which we conclude that $P$ coincides $\mu$-a.e. with an $\alpha$-H\"older continuous function.  %There is thus a constant $C \geq 1$ such that for every $v \in \R^{d}$ and every $x \in \Sigma$, 
%\[
%C^{-1}\|v\| \leq \sqrt{\hat{\eta_{x}}(v,v)} \leq C \|v\|,
%\]
%and
%\[
%C^{-1}\|v\| \leq \sqrt{\widehat{P(x)[\eta_{x}]}(v,v)} \leq C \|v\|.
%\]
%We then conclude that for every $n \in \Z$ and every $x \in \Sigma$, the two inequalites
%\[
%\|A^{n}(x)\| \|A^{n}(x)^{-1}\| \leq C^{2},
%\]
%\[
%\|B^{n}(x)\| \|B^{n}(x)^{-1}\| \leq C^{2}
%\]
\end{proof}

Theorem \ref{coboundary} follows from Corollary \ref{Livsicmeasurable} by taking $B \equiv Id_{\R^{d}}$. 

Before proving Theorem \ref{hyperbolic rigidity}, we obtain a corollary of Theorem \ref{conformalrigidity} for linear cocycles over a subshift of finite type which will be needed in the proof. We say that $\mathcal{A}$ is \emph{measurably reducible} (with respect to the measure $\mu$) if there is some $1 \leq r \leq d-1$ and some $n \geq 1$ such that there is a measurable family of $r$-dimensional linear subspaces $E_{x} \subset \R^{d}$ parametrized by $x \in \Sigma$ such that $A^{n}(x)(E_{x}) = E_{f^{n}(x)}$ for $\mu$-a.e. $x \in \Sigma$. We say that $\mathcal{A}$ is \emph{measurably irreducible} if it is not measurably reducible. Our next corollary gives a measure-theoretic criterion for a H\"older continuous cocycle to preserve a continuous conformal structure.

\begin{cor}\label{irreducible}
Let $\mathcal{A}$ be an $\alpha$-H\"older linear cocycle over $f$. Suppose that $\mathcal{A}$ is measurably irreducible. Then $\mathcal{A}$ preserves an $\alpha$-H\"older continuous conformal structure. 
\end{cor}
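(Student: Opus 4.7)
The plan is to construct a measurable $\mathcal{A}$-invariant conformal structure and then invoke Theorem \ref{conformalrigidity} to upgrade it to an $\alpha$-H\"older continuous invariant conformal structure.

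The first step is to show that measurable irreducibility forces all Lyapunov exponents of $\mathcal{A}$ with respect to $\mu$ to coincide. The Oseledets multiplicative ergodic theorem applied to $\mathcal{A}$ yields a measurable $\mathcal{A}$-invariant decomposition $\R^{d} = E_{1}(x) \oplus \cdots \oplus E_{k}(x)$, defined $\mu$-a.e., with one summand for each distinct Lyapunov exponent. If $k \geq 2$, then any $E_{i}(x)$ is a measurable $\mathcal{A}$-invariant family of proper subspaces of $\R^{d}$, contradicting measurable irreducibility; hence $k = 1$. Replacing $A$ by $\widetilde{A}(x) := |\det A(x)|^{-1/d} A(x)$ does not alter the induced action on $\mathcal{C}^{d}$ nor the invariant subspaces of any iterate, so I may assume that $\mathcal{A}$ takes values in matrices of determinant $\pm 1$ and has all Lyapunov exponents equal to zero.

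The second, harder step is to produce a measurable invariant conformal structure. Using the identification of $\mathcal{C}^{d}$ with the nonpositively curved symmetric space $SL(d,\R)/SO(d,\R)$ equipped with the $GL(d,\R)$-invariant metric $g$, the cocycle $\mathcal{A}$ acts fiberwise by isometries on the $\mathcal{C}^{d}$-bundle over $\Sigma$. Since all Lyapunov exponents vanish, the fiberwise displacement $\sqrt{g}(\eta_{0}, A^{n}(x)[\eta_{0}])$ grows subexponentially, and in fact sublinearly, along $\mu$-a.e. orbit. A Karlsson--Ledrappier-type dichotomy for isometric cocycles on CAT(0) spaces then gives two alternatives: either (a) these orbits are bounded for $\mu$-a.e. $x$, in which case a measurable circumcenter of the orbit closure in each fiber yields a measurable $\mathcal{A}$-invariant section of $\mathcal{C}^{d}$; or (b) there is a measurable assignment of directions in the geometric boundary of $\mathcal{C}^{d}$ to which the orbits converge sublinearly. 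In $SL(d,\R)/SO(d,\R)$, boundary points correspond to parabolic subgroups and case (b) produces a measurable $\mathcal{A}$-invariant family of proper flags in $\R^{d}$, which is ruled out by measurable irreducibility. So (a) must hold, yielding the desired measurable invariant conformal structure. An alternative route is Zimmer's amenable reduction: the minimal amenable algebraic hull of $\mathcal{A}$ cannot lie in any proper parabolic subgroup of $SL(d,\R)$ by irreducibility, and hence must be conjugate into the maximal compact subgroup $SO(d)$, which fixes a conformal structure. Applying Theorem \ref{conformalrigidity} then produces the claimed $\alpha$-H\"older continuous invariant conformal structure.

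I expect the main obstacle to be the second step: verifying the measurability of the circumcenter (or of the boundary Busemann function), and verifying that a measurable invariant boundary direction in $SL(d,\R)/SO(d,\R)$ really does correspond to a measurable invariant family of proper flags in $\R^{d}$, requires careful use of the geometry of the symmetric space. The amenable-reduction approach sidesteps these geometric issues at the cost of appealing to heavier ergodic-theoretic machinery.
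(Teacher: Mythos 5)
Your fallback argument via Zimmer's amenable reduction is essentially the paper's proof, but your primary route has a genuine gap. The dichotomy you attribute to Karlsson--Ledrappier is not a theorem at the level of generality at which you invoke it: in your situation the drift is zero (all exponents vanish), and in the zero-drift case the Karlsson--Ledrappier/Karlsson--Margulis theorems give no boundary point at all --- the sublinear tracking statement becomes vacuous --- while sublinear displacement certainly does not force a.e.\ bounded orbits (translations of $\R$ composed along an orbit with zero mean already give unbounded orbits with zero drift). Results that do give ``bounded orbit or fixed point at infinity'' conclusions, such as Adams--Ballmann type theorems on CAT(0) spaces, require amenability of the acting group, which is not available for a general $GL(d,\R)$-valued cocycle. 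So the alternative ``(a) or (b)'' cannot simply be asserted, and the circumcenter argument only becomes available after boundedness has been established by other means; as it stands, the first route does not produce the measurable invariant conformal structure.

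Your alternative route is the paper's argument, but it is stated too coarsely at the key step: ``not contained in a proper parabolic, hence conjugate into $SO(d)$'' overlooks maximal amenable subgroups that stabilize no single proper subspace but permute a finite family of subspaces (e.g.\ the monomial group), which lie in no proper parabolic yet fix no conformal structure. This is exactly why the paper's notion of measurable reducibility quantifies over iterates $A^{n}$, and why its proof of Corollary \ref{irreducible} first passes to an amenable finite-index subgroup $H$ of the group given by amenable reduction and to an iterate with $B^{n}(x)\in H$ a.e., and only then applies the classification of maximal amenable subgroups of $GL(d,\R)$ (\cite{Moo}, \cite[Theorem 3.4]{KS}): either $H$ stabilizes a proper subspace $V$, so $A^{n}$ preserves the measurable family $P(x)(V)$, contradicting irreducibility precisely because iterates are allowed, or $H\subseteq\R^{\times}\times SO(d,\R)$, giving the measurable invariant conformal structure $P[\langle,\rangle]$ to which Theorem \ref{conformalrigidity} is applied. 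Your Oseledets step is correct but not needed for this route. With the finite-index/iterate point handled as above, your second route coincides with the paper's proof.
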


\begin{proof}
By Zimmer's amenable reduction theorem \cite{Zim} there is a measurable map $P: \Sigma \rightarrow GL(d,\R)$ defined $\mu$-a.e. such that $B(x) := P(f(x))^{-1} A(x) P(x)$ takes values in a subgroup $G$ of $GL(d,\R)$ which contains an amenable subgroup $H$ of finite index. There is then an $n \geq 1$ such that $B^{n}(x) \in H$ for $\mu$-a.e. $x \in \Sigma$. 

According to the classification of maximal amenable subgroups of $GL(d,\R)$, either $H$ stabilizes a proper subspace $\{0\} \subsetneq V \subsetneq \R^{d}$ or $H$ is a subgroup of the conformal linear automorphisms $\R^{\times} \times SO(d,\R)$ (see \cite{Moo} for this classification and \cite[Theorem 3.4]{KS} for a more detailed discussion of this implication). In the first case we conclude that $A^{n}$ leaves $\mu$-a.e. invariant the measurable family of subspaces $E_{x} = P(x)(V)$ and thus $A$ is measurably reducible. In the second case $A$ preserves the measurable conformal structure induced by $P[\langle,\rangle]$, where we recall that $\langle, \rangle$ denotes the standard Euclidean inner product on $\R^{d}$. By Theorem \ref{conformalrigidity} $P[\langle,\rangle]$ coincides $\mu$-a.e. with an $\alpha$-H\"older continuous conformal structure which is preserved by $\mathcal{A}$. 
\end{proof}

We lastly show how to obtain Theorem \ref{hyperbolic rigidity} from Corollary \ref{irreducible}. We let $T^{1}M$ denote the unit tangent bundle of a closed negatively curved manifold $M$ with $\dim M \geq 3$ and let $g^{t}$ be the geodesic flow on $T^{1}M$. Since $M$ is negatively curved, the geodesic flow is an Anosov flow and thus there is a $Dg^{t}$-invariant splitting $T(T^{1}M) = E^{u} \oplus E^{c} \oplus E^{s}$ where $E^{u}$ is exponentially expanded by $Dg^{t}$, $E^{s}$ is exponentially contracted by $Dg^{t}$, and $E^{c}$ is tangent to the flow direction of $g^{t}$. We also have $\dim E^{u} = \dim M - 1 \geq 2$ and we know that $E^{u}$ is an $\alpha$-H\"older continuous subbundle of $T(T^{1}M)$ for some $\alpha > 0$. These assertions are all well-known, see \cite{HK} for instance for proofs. 

Let $\varphi: T^{1}M \rightarrow \R$ be a H\"older continuous potential and $\mu_{\varphi}$ the $g^{t}$-invariant probability measure which is the equilibrium state associated to $\varphi$.  Equilibrium states include the Liouville volume on $T^{1}M$ and the Bowen-Margulis measure of maximal entropy; we refer to \cite{BR} for details. 

Lastly for a subshift of finite type $\Sigma$ and a continuous function $\psi: \Sigma \rightarrow (0,\infty)$ we let $\Sigma_{\psi}$ be the space
\[
\Sigma_{\psi} = \{(x,t) \in \Sigma \times [0,\infty]: t \leq \psi(x) \}/(x,\psi(x)) \sim (f(x),0)
\]
and let $F^{t}$ be the special flow on $\Sigma_{\psi}$ defined by $F^{s}(x,t) = (x,t+s)$ (up to the equivalence relation above).  

\begin{proof}[Proof of Theorem \ref{hyperbolic rigidity}]
Fix a Riemannian metric on $T^{1}M$ and let $| \cdot |$ be the norm on $E^{u}$ given by restricting the Riemannian norm to this bundle. We claim that it suffices to prove that $Dg^{t}|_{E^{u}}$ is uniformly quasiconformal in the sense that there is a constant $C > 0$ such that for all $v \in E^{u}$ and $t \in \R$, 
\[
\left|Dg^{t}(v)\right| \cdot \left|(Dg^{t}(v))^{-1}\right| \leq C |v|^{2}.
\]
This immediately implies that for each periodic point $p$ of $g^{t}$ of period $\l(p)$ the complex eigenvalues of $Dg^{\l(p)}: E^{u}_{p} \rightarrow E^{u}_{p}$ are all equal in absolute value. Hence by work of the author \cite{Bu1}[Theorem 1.1] this implies that $M$ has constant negative curvature. 

For this proof we set $d = \dim E^{u}$. We will derive from the derivative cocycle $Dg^{t}|_{E^{u}}$ a H\"older continuous cocycle $\mathcal{A}$ over a subshift of finite type $f: \Sigma \rightarrow \Sigma$ together with an $f$-invariant measure $\nu$ corresponding to the $g^{t}$-invariant measure  $\mu_{\varphi}$ such that if $Dg^{t}|_{E^{u}}$ is measurably irreducible with respect to $\mu_{\varphi}$ then $\mathcal{A}$ is measurably irreducible with respect to $\nu$. We then apply Corollary \ref{irreducible} to $\mathcal{A}$ to derive a continuous invariant conformal structure. We will show that this implies that the cocycle $Dg^{t}|_{E^{u}}$ is uniformly quasiconformal which completes the proof by our work above. 

From the remarks above it suffices to show that $Dg^{t}_{E^{u}}$ preserves a continuous conformal structure. We first recall aspects of the construction of a Markov partition for an Anosov flow \cite{R73}. The geodesic flow $g^{t}$ is a topologically mixing Anosov flow and thus admits a Markov partition into parallelograms $\{P_{i}: 1 \leq i \leq k\}$, whose sides are composed of orbits of $g^{t}$ and leaves of the stable and unstable foliations for $g^{t}$. Given any $\e > 0$ these parallelograms can be chosen such that the bases $\{Q_{i} :1 \leq i \leq k\}$ transverse to the orbits of $g^{t}$ satisfy $\text{diam}\, Q_{i} < \e$. We choose $\e$ small enough that the bundle $E^{u}$ is trivial on each base $Q_{i}$. 

There is then a subshift of finite type $f: \Sigma \rightarrow \Sigma$ on $k$ symbols, a H\"older continuous roof function $\psi: \Sigma \rightarrow (0,\infty)$, and a H\"older continuous surjection $h: \Sigma_{\psi} \rightarrow T^{1}M$ such that the flow $F^{t}$ on $\Sigma_{\psi}$ satisfies $h \circ F^{t} = g^{t} \circ h$. Furthermore, identifying $\Sigma$ with $\Sigma \times \{0\} \subset \Sigma_{\psi}$,  each cylinder $[0;i]$, $1 \leq i \leq k$, satisfies $h([0;i]) = Q_{i}$. 

There is an ergodic, fully supported measure $\hat{\nu}$ on $\Sigma_{\psi}$ with local product structure such that $h_{*}\hat{\nu} = \mu_{\varphi}$ and such that $h$ is a bijection from a full measure subset $\Omega$ of $\Sigma_{\psi}$ to a full measure subset $h(\Omega)$ of $T^{1}M$. We let $\nu$ be the probability measure on $\Sigma$ obtained from pushing forward the measure $\hat{\nu}$ by the natural projection $\Sigma_{\psi} \rightarrow \Sigma$. The measure $\nu$ is also ergodic, fully supported, and has local product structure\cite{BR}.

%We will construct from this data a H\"older continuous linear cocycle $\hat{A}: \Sigma \rightarrow GL(d,\R)$,  and a measurable invariant conformal structure $\hat{\eta}$ for $\hat{A}$. 
The bundle $E^{u}$ pulls back to a H\"older vector bundle $\hat{\E}$ over $\Sigma_{\psi}$. We let $\E$ be the bundle over $\Sigma$ obtained from pulling back $\hat{\E}$ by the identification $\Sigma \rightarrow \Sigma \times \{0\} \subset \Sigma_{\psi}$. Since $E^{u}$ is a H\"older continuous bundle which is trivial over each set $Q_{i}$ there is a H\"older trivialization $L: \Sigma \times \R^{d} \rightarrow \E$.  The restriction $Dg^{t}|E^{u}$ of the derivative cocycle $Dg^{t}$ of $g^{t}$ to the unstable bundle $E^{u}$ is H\"older (from the H\"older continuity of $E^{u}$). We let  $\hat{A}^{t}$ denote the pullback of the linear cocycle $Dg^{t}|E^{u}$ to a linear cocycle on $\hat{\mathcal{E}}$ over $F^{t}$. This induces a H\"older cocycle $\mathcal{A}$ with generator  $A: \Sigma \rightarrow GL(d,\R)$ over $f: \Sigma \rightarrow \Sigma$ by taking the time $\psi(x)$ map $\hat{A}^{\psi}(x)$ over $x \in \Sigma \times \{0\}$ and then using the trivialization $L$ of $\E$. 

We claim that $\mathcal{A}$ is measurably irreducible. Suppose that there is a proper measurable invariant subbundle $V$ for $\mathcal{A}$. For each $x\in \Sigma$ at which $V$ is defined we let $V_{x} \subset \R^{d}$ be the subspace for this bundle at $x$. For each $x \in \Sigma$ at which $V_{x}$ is defined and each $t \in \R$ we define $\hat{V}_{(x,t)} = \hat{A}^{t}(L(V_{x}))$. From the local decomposition of $\hat{\nu}$ into a product of $\nu$ and Lebesgue measure on orbits together with the $\nu$-a.e. $\mathcal{A}$-invariance of $V$ it follows that $\hat{V}$ is a $\hat{\nu}$-a.e. well-defined measurable subbundle of $\hat{\E}$ which is also $\hat{A}^{t}$-invariant $\hat{\nu}$-a.e. From the construction of $\hat{\E}$ this implies that there is a proper measurable subbundle of $E^{u}$ which is defined and $Dg^{t}$-invariant $\mu_{\varphi}$-a.e. This contradicts the hypothesis that $Dg^{t}|_{E^{u}}$ is measurably irreducible. 

Having shown $\mathcal{A}$ is measurably irreducible, we apply Corollary \ref{irreducible} to conclude that $\mathcal{A}$ preserves a continuous invariant conformal structure $\eta$. From this we construct a continuous invariant conformal structure $\hat{\eta}$ for $\hat{A}^{t}$ by the formula $\hat{\eta}_{(x,t)} = (\hat{A}^{t} \circ L)_{*}[\eta_{x}]$ for each $x \in \Sigma$ and $t \in \R$. 

Recall from the construction of a Markov partition that there is a full $\hat{\nu}$-measure subset $\Omega \subset \Sigma_{\psi}$ on which $h : \Omega \rightarrow h(\Omega)$ is a continuous bijection onto a full $\mu_{\varphi}$-measure subset $h(\Omega)$ of $T^{1}M$. Let $S: \hat{\E}|_{\Omega} \rightarrow E^{u}|_{h(\Omega)}$ be the continuous linear identification on fibers coming from the construction of $\hat{\E}$ as the pullback of $E^{u}$. 

Since $\hat{\eta}$ is continuous on $\Sigma_{\psi}$, we conclude that there is a constant $\gamma > 0$ such that for any $v \in E^{u}|_{h(\Omega)}$ we have 
\[
 \gamma^{-1} \cdot \hat{\eta}(S^{-1}(v), S^{-1}(v)) \leq  |v|^{2} \leq \gamma \cdot \hat{\eta}(S^{-1}(v), S^{-1}(v)).
\]
Since $\hat{\eta}$ is invariant under $\hat{A}^{t}$ and $S \circ \hat{A}^{t} = Dg^{t} \circ S$, we conclude using the above inequality that for any $v \in E^{u}|_{h(\Omega)}$ and $t \in \R$ we have 
\begin{align*}
\left|Dg^{t}(v)\right|^{2} &\leq \gamma \cdot \hat{\eta}(S^{-1}(Dg^{t}(v)), S^{-1}(Dg^{t}(v))) \\
& = \gamma \cdot \hat{\eta}(S^{-1}(v), S^{-1}(v)) \\
& \leq \gamma ^{2} |v|^{2}
\end{align*}
Since $(Dg^{t}(v))^{-1} = Dg^{-t}(g^{t}(v)$, this readily implies that 
\[
\left|Dg^{t}(v)\right| \cdot \left|(Dg^{t}(v))^{-1}\right| \leq \gamma^{2} |v|^{2}
\] 
for all $v \in E^{u}|_{h(\Omega)}$ and $t \in \R$. Since $h(\Omega)$ is a dense subset of $T^{1}M$ we conclude that this inequality actually holds for all $v \in E^{u}$ and $t \in \R$. We conclude that $Dg^{t}|_{E^{u}}$ is a uniformly quasiconformal linear cocycle over $g^{t}$, as desired.

%We observe that $\mathcal{A}$ is a H\"older continuous linear cocycle over $F^{t}$ and $\mathcal{A}$ is measurably irreducible with respect to $\nu$. This is because $h$ is a bijection from a full $\nu$-measure subset of $\Sigma_{\psi}$ to a full $\mu_{\varphi}$-measure subset of $T^{1}M$ and therefore any measurable invariant subbundle for $\mathcal{A}$ can be pushed forward to a measurable $Dg^{t}$-invariant subbundle of $E^{u}$. 

%Note that for each $x \in \Sigma$, $F^{\psi(x)}(x) = f(x)$. We let $\mathcal{A}^{\psi}$ be the linear cocycle over $f$ induced from $\mathcal{A}$ by taking the time $\psi(x)$ map at $x$. We again note that $\mathcal{A}^{\psi}$ is H\"older continuous and is measurably irreducible; this follows from the facts that $\psi$ is H\"older continuous and $\mathcal{A}$ is measurably irreducible. We can thus apply Corollary \ref{irreducible} to $\mathcal{A}^{\psi}$ to conclude that $\mathcal{A}^{\psi}$ preserves a continuous conformal structure $\eta$ on $\Sigma \times \R^{d}$. From this it follows that the linear cocycle $\mathcal{A}$ is uniformly quasiconformal (since $\psi$ is bounded) and thus $Dg^{t}|E^{u}$ is uniformly quasiconformal as well. By results of Sadovskaya \cite{S05} this implies that $Dg^{t}|E^{u}$ preserves a continuous conformal structure on $E^{u}$. 
\end{proof}

%We note from the proof that the only place in which the $1/4$-pinching hypothesis on the curvature of $M$ is used is to prove that if $Dg^{t}|_{E^{u}}$ is measurably irreducible then it preserves a measurable conformal structure. Hence the $1/4$-pinching hypothesis can be removed if instead one assumes from the beginning that $Dg^{t}|_{E^{u}}$ preseves a measurable conformal structure. 

\section{Definitions and Reductions}\label{shift definitions}
In this section we define some common objects associated to $\Sigma$ and make some preliminary reductions before beginning the proof of Theorem \ref{conformalrigidity}. 

\subsection{Definitions} We define
\[
\Sigma^{u} = \{(x_{n})_{n \geq 0}: q_{x_{n}x_{n+1}} = 1 \;\text{for all $n \geq 0$}\},
\]
\[
\Sigma^{s} = \{(x_{n})_{n < 0}: q_{x_{n}x_{n+1}} = 1 \;\text{for all $n \leq -2$}\},
\] 
to be the sets of one-sided right and left infinite sequences respectively which are associated to $Q$. We have projections $\pi_{u}: \Sigma \rightarrow \Sigma^{u}$ and $\pi_{s}: \Sigma \rightarrow \Sigma^{s}$ obtained by dropping all of the negative coordinates and all of the nonnegative coordinates respectively of a sequence in $\Sigma$. We let $f_{u}$ denote the left shift on $\Sigma^{u}$ and $f_{s}$ denote the right shift on $\Sigma^{s}$. 

We define the \emph{local stable set} of $x \in \Sigma$ to be 
\[
W^{s}_{loc}(x) = \{(y_{n})_{n \in\mathbb{Z}} \in \Sigma:x_{n}  = y_{n} \; \text{for all $n \geq 0$}\},
\] 
and the \emph{local unstable set} to be 
\[
W^{u}_{loc}(x) = \{(y_{n})_{n \in \mathbb{Z}} \in \Sigma :x_{n}  = y_{n} \; \text{for all $n \leq 0$}\}.
\]
For $m \in \mathbb{Z}$ and $a_{0},\dots,a_{k} \in \N$, we define the cylinder notation
\[
[m;a_{0},\dots,a_{k}] = \{x \in \Sigma: x_{m+i} = a_{m+i}, \, 0 \leq i \leq k\}.
\]
For $m \geq 0$, we let $[m;a_{0},\dots,a_{k}]^{u}$ denote the corresponding cylinder in $\Sigma^{u}$ and for $m \leq -k$,  $[m;a_{0},\dots,a_{k}]^{s}$ is the corresponding cylinder in $\Sigma^{s}$.

For two points $x,y \in [0;i]$, $1 \leq i \leq \l$, we define $[x,y]$ to be the unique point in the intersection $W^{u}_{loc}(x) \cap W^{s}_{loc}(y)$. 

We let $\mu$ be a fully supported ergodic $f$-invariant probability measure on $\Sigma$. We let $\mu^{u} = (\pi_{u})_{*}\mu$ and $\mu^{s} = (\pi_{s})_{*}\mu$ denote the projections of $\mu$ to $\Sigma^{u}$ and $\Sigma^{s}$ respectively. For a subset $E$ of $\Sigma$ we let $\mu | E$ denote the restriction of $\mu$ to $E$. Note that for each $1 \leq i \leq \l$ we have a natural homeomorphism $[0;i] \rightarrow \pi_{s}([0;i])  \times \pi_{u}([0;i]))$ given by $\pi_{s} \times \pi_{u}$. We say that $\mu$ has \emph{local product structure} if there is a continuous function $\xi: \Sigma \rightarrow (0,\infty)$ such that 
\[
\mu | [0;i] = \xi \cdot  (\mu^{s}|\pi_{s}([0;i]) \times \mu^{u}|\pi_{u}([0;i])).
\]
We will always assume that $\mu$ has local product structure in this paper. 

As a consequence of the local product structure of $\mu$, the measures $\mu^{u}$ and $\mu^{s}$ admit Jacobians with respect to the dynamics $f_{u}$ and $f_{s}$ \cite[Lemma 2.2]{BV04},

\begin{prop}\label{jacobians}
There are continuous functions $J_{u}: \Sigma^{u} \rightarrow (0,\infty)$ and $J_{s}: \Sigma^{s} \rightarrow (0,\infty)$ such that for each $1 \leq i,j \leq \l$ and each Borel subset $K_{u} \subseteq [0;i]^{u}$, 
\[
\mu^{u}(K_{u}) = \int_{f_{u}^{-1}(K_{u}) \cap [0;j,i]^{u}}J_{u}\, d\mu^{u},
\]
and likewise for each Borel subset $K_{s} \subseteq [0;i]^{s}$, 
\[
\mu^{s}(K_{s}) = \int_{f_{s}^{-1}(K_{s}) \cap [-1;i,j]^{s}}J_{s} \, d\mu^{s},
\]
In particular, if $\mu^{u}(K_{u}) > 0$ then $\mu^{u}(f_{u}^{-1}(K_{u})) \cap [0;j,i]^{u}) > 0$ for each $1 \leq j \leq \l$, and the analogous statement is true for $\mu^{s}$ and $K_{s}$. 
\end{prop}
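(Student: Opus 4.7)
The plan is to exploit the $f$-invariance of $\mu$ together with the local product structure to write down an explicit formula for $J_{u}$; the construction of $J_{s}$ is entirely symmetric, using $f^{-1}$-invariance. The key observation is that for any admissible word $j,i,a_{2},\dots,a_{k}$, the left shift $f$ carries the cylinder $[0;j,i,a_{2},\dots,a_{k}]$ bijectively onto the cylinder $[-1;j,i,a_{2},\dots,a_{k}]$, hence by $f$-invariance of $\mu$,
\[
\mu([0;j,i,a_{2},\dots,a_{k}]) = \mu([-1;j,i,a_{2},\dots,a_{k}]).
\]
The left cylinder sits inside $[0;j]$ while the right one sits inside $[0;i]$, so I will evaluate each side using the local product structure on the respective parallelogram.

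On $[0;j]$, consistency of the product decomposition with the marginal $(\pi_{u})_{*}(\mu|_{[0;j]}) = \mu^{u}|_{[0;j]^{u}}$ forces the normalization $\int_{\pi_{s}([0;j])}\xi(s,u)\,d\mu^{s}(s) = 1$ for every $u \in [0;j]^{u}$, so Fubini collapses the left-hand side above to $\mu^{u}([0;j,i,a_{2},\dots,a_{k}]^{u})$. On $[0;i]$, the cylinder $[-1;j,i,a_{2},\dots,a_{k}]$ corresponds under the homeomorphism $[0;i]\cong \pi_{s}([0;i])\times [0;i]^{u}$ to $[-1;j]^{s}\times [0;i,a_{2},\dots,a_{k}]^{u}$, so the right-hand side equals $\int_{[0;i,a_{2},\dots,a_{k}]^{u}}\Psi^{j}(u)\,d\mu^{u}(u)$, where I set $\Psi^{j}(u) := \int_{[-1;j]^{s}}\xi(s,u)\,d\mu^{s}(s)$. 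Since $[0;j,i,a_{2},\dots,a_{k}]^{u} = f_{u}^{-1}([0;i,a_{2},\dots,a_{k}]^{u})\cap [0;j,i]^{u}$, equating the two sides yields the cylinder identity
\[
\mu^{u}\bigl(f_{u}^{-1}(K_{u}) \cap [0;j,i]^{u}\bigr) = \int_{K_{u}}\Psi^{j}\,d\mu^{u},
\]
which a standard monotone class argument extends to all Borel $K_{u}\subseteq [0;i]^{u}$.

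This identity reads $(f_{u})_{*}(\mu^{u}|_{[0;j,i]^{u}}) = \Psi^{j}\cdot\mu^{u}|_{[0;i]^{u}}$, and continuity and positivity of $\xi$ on the compact space $\Sigma$, together with $\mu^{s}([-1;j]^{s}) > 0$ from full support, ensure that $\Psi^{j}$ is continuous and bounded above and below by positive constants. Inverting, the function $J_{u}(y) := 1/\Psi^{y_{0}}(f_{u}(y))$ is continuous and positive on $\Sigma^{u}$ and satisfies the proposition's formula $\mu^{u}(K_{u}) = \int_{f_{u}^{-1}(K_{u})\cap [0;j,i]^{u}} J_{u}\,d\mu^{u}$ for every Borel $K_{u}\subseteq [0;i]^{u}$; the ``in particular'' clause is then immediate from boundedness of $J_{u}$ above on the compact set $[0;j,i]^{u}$. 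The Jacobian $J_{s}$ is built in exactly the same way, using $f^{-1}$-invariance and interchanging the roles of $\pi_{s}$ and $\pi_{u}$. I do not anticipate any deep obstacle: the argument is a formal manipulation once $f$-invariance and local product structure have been combined, with the only real care required in the bookkeeping of the $\pi_{s}$- and $\pi_{u}$-projections of the relevant cylinders and in deriving the normalization $\int\xi(s,u)\,d\mu^{s}(s) = 1$ from compatibility of the local product decomposition with its $\pi_{u}$-marginal.
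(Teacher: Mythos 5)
Your argument is correct, and it is in fact a complete proof where the paper offers none: the paper simply quotes this statement from \cite[Lemma 2.2]{BV04}, so your derivation is a self-contained substitute for that citation (and is essentially the standard argument behind it). The chain of ideas — $f$-invariance gives $\mu([0;j,i,a_2,\dots,a_k])=\mu([-1;j,i,a_2,\dots,a_k])$, the local product structure plus Fubini converts the right-hand side into $\int_{[0;i,a_2,\dots,a_k]^u}\Psi^j\,d\mu^u$ with $\Psi^j(u)=\int_{[-1;j]^s}\xi(s,u)\,d\mu^s(s)$, a monotone class argument upgrades the cylinder identity to all Borel $K_u$, and $J_u(y)=1/\Psi^{y_0}(f_u(y))$ then satisfies the stated formula by the change-of-variables identity for $(f_u)_*(\mu^u|_{[0;j,i]^u})$ — all checks out, and continuity and two-sided positive bounds for $J_u$ do follow from $0<\inf_\Sigma\xi\le\sup_\Sigma\xi<\infty$, uniform continuity of $\xi$, and $\mu^s([-1;j]^s)>0$ by full support. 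Two small remarks. First, your normalization $\int_{\pi_s([0;j])}\xi(s,u)\,d\mu^s(s)=1$ a priori holds only for $\mu^u$-a.e.\ $u$ (it becomes ``for every $u$'' only after invoking continuity of $\xi$ and full support of $\mu^u$); but you do not actually need it, since $\mu^u([0;j,i,a_2,\dots,a_k]^u)=\mu(\pi_u^{-1}([0;j,i,a_2,\dots,a_k]^u))=\mu([0;j,i,a_2,\dots,a_k])$ directly from $\mu^u=(\pi_u)_*\mu$, the cylinder constraining only nonnegative coordinates. Second, your proof (like the statement itself, and like its later use in Lemma \ref{Lipschitzapprox}) should be read for admissible transitions $q_{ji}=1$: if $q_{ji}=0$ then $[0;j,i]^u=\emptyset$, $[-1;j]^s\cap\pi_s([0;i])=\emptyset$, and the displayed identity is only sensible in the vacuous case — worth a clause, but not a gap in the argument.
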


Let $\nu$ be any $f$-invariant ergodic probability measure on $\Sigma$. The \emph{extremal Lyapunov exponents} of $\mathcal{A}$ with respect to $\nu$ are defined to be 
\[
\la_{+}(\mathcal{A},\nu) = \inf_{n \geq 1} \frac{1}{n}\int_{\Sigma} \log \|A^{n}\| \,d\nu,
\]
\[
\la_{-}(\mathcal{A},\nu) = \sup_{n \geq 1} \frac{1}{n}\int_{\Sigma} \log \|A^{-n}\|^{-1} \,d\nu
\]
By Kingman's subadditive ergodic theorem \cite{K68}, for $\nu$-a.e. $x \in \Sigma$ we have 
\[
\lim_{n \rightarrow \infty} \frac{1}{n}\log \|A^{n}(x)\| =  \la_{+}(\mathcal{A},\nu),
\]
and the analogous statement is true for $\la_{-}(\mathcal{A},\nu)$. We will be particularly interested in the extremal Lyapunov exponents of probability measures supported on periodic orbits of $f$. 

\subsection{Reductions}\label{subsec: reduction} We make some reductions before beginning the proof of Theorem \ref{conformalrigidity}.  Observe that a function is $\alpha$-H\"older continuous with respect to the distance $\rho_{\tau}$ if and only if it is Lipschitz continuous with respect to $\rho_{\alpha\tau}$. Hence it suffices to prove Theorem \ref{conformalrigidity} in the case that $A$ is Lipschitz continuous with respect to $\rho_{\tau}$. From now on we fix $\tau \in (0,\infty)$ and define $\rho: = \rho_{\tau}$. 

We next reduce to the case where $A$ takes values in $SL(d,\R)$. If we define 
\[ 
B = \det(A)^{-\frac{1}{d}} \cdot A,
\]
then $\det(B(x)) = 1$ for every $x \in \Sigma$ so $B$ takes values in $SL(d,\R)$. It is clear that $B$ is Lipschitz with respect to $\rho$ if and only if $A$ is Lipschitz with respect to $\rho$. Let $\mathcal{B}$ be the linear cocycle generated by $B$. If $\eta: \Sigma \rightarrow \mathcal{C}^{d}$ defines a measurable conformal structure on $\Sigma \times \R^{d}$, then it is clearly $\mu$-a.e. $\mathcal{A}$-invariant if and only if it is $\mu$-a.e. $\mathcal{B}$-invariant, since these cocycles are conformally equivalent to each other. Thus for the rest of the paper we will assume that $\mathcal{A}$ takes values in $SL(d,\R)$. As a consequence, for $\mu$-a.e. $x \in \Sigma$ we have the equality 
\[
\eta_{f(x)}(A(x)v,A(x)w) = \eta_{x}(v,w), \; \text{for every $v,w \in \R^{d}$}.
\]
Lastly we reduce to the case where $f: \Sigma \rightarrow \Sigma$ is topologically mixing. By the spectral decomposition theorem for hyperbolic basic sets there is an integer $k \geq 1$ such that we can write $\Sigma = \bigsqcup_{i = 1}^{k} \Sigma_{i}$ for closed subsets $\Sigma_{i}$ of $\Sigma$ such that $f(\Sigma_{i}) = \Sigma_{i+1}$, where the index is taken mod $k$ and $f^{k}|\Sigma_{i}$ is a topologically mixing subshift of finite type for $1 \leq i \leq k$. The normalized restriction $\mu_{i}$ of $\mu$ to $\Sigma_{i}$ is an $f^{k}$-invariant ergodic, fully supported probability measure with local product structure and $A^{k}|\Sigma_{i}$ generates a Lipschitz continuous linear cocycle $\mathcal{A}_{i}^{k}$ over $f^{k}$. 

If $\mathcal{A}$ preserves a measurable conformal structure $\eta$, then $\mathcal{A}_{i}^{k}$ preserves the measurable conformal structure $\eta | [0;i]$. Hence if Theorem \ref{conformalrigidity} holds for topologically mixing subshifts of finite type, then $\eta | [0;i]$ coincides $\mu_{i}$-a.e. with a Lipschitz continuous invariant conformal structure for $\mathcal{A}^{k}_{i}$. This holds for each $1 \leq i \leq k$; we thus conclude that $\eta$ agrees $\mu$-a.e. with a Lipschitz continuous conformal structure on $\Sigma \times \R^{d}$ which is necessarily $\mathcal{A}$-invariant.

\section{Proof of Main Theorem}\label{sec: proof}
We let $f: \Sigma \rightarrow \Sigma$ be a topologically mixing subshift of finite type equipped with a metric $\rho = \rho_{\tau}$ as defined in \eqref{metric def}. We let $\mathcal{A}$ be a Lipschitz continuous $SL(d,\R)$-valued cocycle over $f$. Let $\eta: \Sigma \rightarrow SL(d,\R)/SO(d,\R)$ be an $\mathcal{A}$-invariant measurable family of inner products on $\R^{d}$ for which the equality $A(x)[\eta_{x}] = \eta_{f(x)}$ holds for $\mu$-a.e. $x$. In this sectionw we will prove that there is a Lipschitz continuous $\mathcal{A}$-invariant inner product $\hat{\eta}$ which coincides $\mu$-a.e. with $\eta$. By the reductions of Section \ref{subsec: reduction} this completes the proof of Theorem \ref{conformalrigidity}.

For a periodic point $x$ of period $k$ we define $\nu_{x}$ to be the $f$-invariant probability measure supported on the orbit of $p$ which is given by the formula
\[
\nu_{x} = \frac{1}{k}\sum_{j = 0}^{k-1} \delta_{f^{j}(x)},
\] 
where $\delta_{p}$ denotes the Dirac probability measure supported at a point $p$. We use the shorthand $\la_{+}(p) = \la_{+}(\mathcal{A},\nu_{p})$ and $\la_{-}(p) =\la_{-} (\mathcal{A},\nu_{p})$ for the extremal Lyapunov exponents of $\mathcal{A}$ with respect to $\nu_{p}$. The key proposition we will prove in this section is the following, 

\begin{prop}\label{periodiczero}
Suppose that $\mathcal{A}$ preserves a measurable conformal structure. Then $\la_{+}(p) = 0$ for every periodic point $p$ of $f$. 
\end{prop}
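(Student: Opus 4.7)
The plan is to argue by contradiction. Suppose $\la_+(p) > 0$ for some periodic $p$ of period $m$. Since $\mathcal{A}$ is $SL(d,\R)$-valued, $\det A^m(p) = 1$ while the spectral radius exceeds $1$, so the isometry $A^m(p)_*$ of the nonpositively curved symmetric space $\mathcal{C}^d$ is non-elliptic with positive translation length $\ell > 0$. Consequently, for every $\zeta \in \mathcal{C}^d$,
\[
d_{\mathcal{C}^{d}}(A^{mn}(p)_*[\zeta], \zeta) \geq \ell n - O(1).
\]

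The first step is a Lusin reduction: choose a compact $K \subset \Sigma$ with $\mu(K) > 1 - \epsilon$ on which $\eta$ is continuous, so that $d_{\mathcal{C}^{d}}(\eta_x, \langle,\rangle) \leq M$ for all $x \in K$. Using the local product structure of $\mu$, its full support, and Proposition \ref{jacobians}, I would construct, for each large $n$, a point $x$ whose orbit segment $x, f(x), \ldots, f^{mn}(x)$ shadows the periodic orbit of $p$, with $x$ lying on a local unstable cylinder of $p$ (giving $\rho(f^k(x), f^k(p)) \lesssim e^{-\tau(mn-k)}$ for $k$ up to $mn/2$) and $f^{mn}(x)$ on a local stable cylinder (giving the analogous decay from the other side), such that both $x$ and $f^{mn}(x)$ belong to $K$. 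The existence of such $x$ on a set of positive $\mu$-measure is where the product structure and the Jacobian estimates of Proposition \ref{jacobians} enter, since one must intersect two cylinders of product-measure-positive mass with the Lusin set $K$. Since $\eta$ is $\mathcal{A}$-invariant, $A^{mn}(x)_*[\eta_x] = \eta_{f^{mn}(x)}$, and both endpoints lie within $d_{\mathcal{C}^d}$-distance $M$ of $\langle,\rangle$; since the action is isometric, this forces
\[
d_{\mathcal{C}^{d}}(A^{mn}(x)_*[\langle,\rangle], \langle,\rangle) \leq 2M.
\]

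The main obstacle, and the reason that the fiber bunching hypothesis has usually been needed, is the passage from this bound at $x$ to the corresponding bound at $p$: namely, one must show
\[
d_{\mathcal{C}^{d}}(A^{mn}(x)_*[\langle,\rangle], A^{mn}(p)_*[\langle,\rangle]) = O(1),
\]
uniformly in $n$, which combined with the translation-length lower bound gives $\ell n \leq O(1)$, a contradiction. Without fiber bunching, the multiplicative error matrix $A^{mn}(p)^{-1} A^{mn}(x)$ need not be bounded, since the exponentially small Lipschitz error of $A$ at step $k$ can be amplified by the factor $\sim e^{\la_+(p)(mn-k)}$ from the remaining iterates of $A^{mn}(p)$. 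My plan to circumvent this is to exploit the two-sided shadowing setup (via the stable and unstable cylinder choice of $x$ and $f^{mn}(x)$) together with the conformal structure $\eta$ itself: although $A$ may distort the Euclidean norm arbitrarily, it is an isometry for $\eta$, so errors measured in the $\eta$-metric telescope cleanly along the shadowing orbit, while the conversion between the $\eta$-metric and the Euclidean metric at the two endpoints costs only the bounded quantity $M$. Carrying this out rigorously is the crux of the proof, and I expect the fine details to take up the bulk of the section.
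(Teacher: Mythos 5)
The first half of your argument (translation length of $A^{m}(p)$ in $\mathcal{C}^{d}$, Lusin set $K$, shadowing points $x$ with $x, f^{mn}(x)\in K$, hence $d_{\mathcal{C}^{d}}(A^{mn}(x)_{*}[\langle,\rangle],\langle,\rangle)\leq 2M$) is sound, but the step you yourself flag as the crux is a genuine gap, and the idea you sketch for it does not close it. To compare $A^{mn}(x)$ with $A^{mn}(p)$ you must, in one form or another, telescope over the intermediate times $0<k<mn$, and at step $k$ the only smallness you have is the H\"older/Lipschitz bound $\|A(f^{k}(x))-A(f^{k}(p))\|\lesssim e^{-\tau\min(k,mn-k)}$, which is a \emph{Euclidean} estimate. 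Transporting this error to the endpoint requires conjugating by the partial products, which magnifies it by $\|A^{k}(x)\|\,\|A^{k}(x)^{-1}\|\sim e^{(\la_{+}(p)-\la_{-}(p))k}$ along the shadowing segment; when $\la_{+}(p)-\la_{-}(p)\geq\tau$ the series diverges, and this is exactly the fiber-bunching obstruction. Passing to the $\eta$-metric does not rescue this: the cocycle is an $\eta$-isometry only along the orbit of $x$, the error term $A(f^{k}(x))^{-1}A(f^{k}(p))$ is small only in the Euclidean norm, and converting Euclidean smallness into $\eta$-smallness at time $k$ costs a factor $e^{d_{\mathcal{C}^{d}}(\eta_{f^{k}(x)},\langle,\rangle)}$, which is uncontrolled because the intermediate points $f^{k}(x)$ are generally outside $K$ (the Lusin set controls only the two endpoints), and $\eta$ is merely measurable, undefined and without any modulus of continuity along the measure-zero orbit of $p$. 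So the claim $d_{\mathcal{C}^{d}}(A^{mn}(x)_{*}[\langle,\rangle],A^{mn}(p)_{*}[\langle,\rangle])=O(1)$ is not established, and no mechanism in your setup produces it.

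The paper never makes such a comparison along the bad orbit. Instead it first builds holonomies and a canonical continuous extension $\hat{\eta}$ of $\eta$ on the sets $\mathcal{D}(N,\theta)$, $\theta<\tau$ (Lemmas \ref{holonomyinvariance}--\ref{extension}, with the local product structure entering crucially in Lemma \ref{Lipschitzapprox} to guarantee $\mathcal{D}(N,\theta)\subseteq\mathrm{supp}(\mu\,|\,\mathcal{D}(N_{*},\theta_{*}))$); this yields that any periodic point with exponent gap $<\tau$ has $\la_{+}=0$, and, via Proposition \ref{zeroexponents} and Kalinin's approximation theorem, a periodic point $y$ with $\la_{+}(y)=0$. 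Then, assuming a bad periodic point $x$ exists, it constructs \emph{new} periodic points $p^{m}$ that shadow $y$ for a tuned proportion of the time and $x$ for the rest, so that the accumulated non-conformality of $p^{m}$ stays below the threshold ($p^{m}\in\mathcal{D}(N,\theta)$ with $\theta<\tau$) while $\|A^{u_{m}}(p^{m})\|\to\infty$; boundedness of $\hat{\eta}$ on the compact set $\mathcal{D}(N,\theta)$ then gives the contradiction. Your proposal uses none of this machinery (no vanishing of $\la_{\pm}(\mathcal{A},\mu)$, no holonomy extension, no orbit-mixing), and without a substitute for it the crux step fails precisely in the non-fiber-bunched regime the theorem is designed to handle.
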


In Section \ref{conclusion} at the end of the paper we apply work of Kalinin and Sadovskaya to show how Proposition \ref{periodiczero} implies Theorem \ref{conformalrigidity}.

The plan of the proof is the following: We let $\mathcal{D}(N,\theta)$ be the set of points $x \in \Sigma$ which satisfy the two conditions 
\[
\prod_{j = 0}^{s-1} \|A^{N}(f^{jN}(x))\| \|A^{N}(f^{jN}(x))^{-1}\| \leq e^{sN\theta} \; \; \text{for all $s \geq 1$},
\]
and 
\[
\prod_{j = 0}^{s-1} \|A^{-N}(f^{-jN}(x))\| \|A^{-N}(f^{-jN}(x))^{-1}\| \leq e^{sN\theta} \; \; \text{for all $s \geq 1$}.
\]
The cocycle $\mathcal{A}$ is fiber bunched if and only if there is a $\theta < \tau$ and $N \in \N$ such that $\mathcal{D}(N,\theta) = \Sigma$. In general the sets $\mathcal{D}(N,\theta)$ with $\theta < \tau$ isolate compact subsets of the space $\Sigma$ on which we can treat $\mathcal{A}$ as if it were fiber bunched. 

A straightforward argument will show that if $\mathcal{A}$ preserves a measurable conformal structure $\eta$ then, given any $\theta > 0$, we have that for $\mu$-a.e. $x \in \Sigma$ there is an $N > 0$ such that $x \in \mathcal{D}(N,\theta)$. We show in Lemma \ref{smallperiodiczero} below that any periodic point $p$ satisfying $\la_{+}(p) - \la_{-}(p)  < \tau$ lies in $\mathcal{D}(N,\theta)$ for some $\theta < \tau$ and some $N$. We will show in Section \ref{extension} that $\eta$ admits a canonical $\mathcal{A}$-invariant (in an appropriate sense) extension $\hat{\eta}$ from a full measure subset of $\mathcal{D}(N,\theta)$ to the entire set. In particular, letting $k$ denote the period of $p$, we conclude that $A^{k}(p)$ preserves the inner product $\hat{\eta}_{p}$ on $\R^{d}$. This shows that if $\la_{+}(p)  - \la_{-}(p)< \tau$ then we actually have $\la_{+}(p) = 0$.

Thus there are no periodic points $p$ satisfying $0 < \la_{+}(p) - \la_{-}(p)< \tau$. The essential strategy to finish the proof is to show that if there exists a periodic point $x$ satisfying $\la_{+}(x) -\la_{+}(x) \geq \tau$ then by appropriately mixing the orbit of $x$ with the orbit of a periodic point $y$ satisfying $\la_{+}(y) = 0$ we can construct a new periodic point $p$ satisfying  $0 < \la_{+}(p) - \la_{-}(p)< \tau$. This would give a contradiction that shows that we must have $\la_{+}(p) = 0$ for all periodic points $p$. For technical reasons related to the difficulty of estimating Lyapunov exponents from below we will instead use a modification of this strategy below. 

%In Section \ref{subsec: holonomies} we introduce the measurable stable and unstable holonomies for the cocycle $\mathcal{A}$ which were used by Viana to prove generic nonvanishing of Lyapunov exponents over hyperbolic systems. These holonomies are uniformly continuous on compact sets $\mathcal{D}(N,\theta)$ (defined below) composed of points at which $\mathcal{A}$ satisfies the fiber bunching condition after a uniform period of time. We show $\eta$ is invariant $\mu$-a.e. on each of these compact sets and then use Lemma \ref{Lipschitzapprox} to construct a canonical extension $\hat{\eta}$ of $\eta$ to the full compact set $\mathcal{D}(N,\theta)$ which restricts coherently along the filtrationUsing Lemma \ref{Lipschitzapprox} below we construct an explicit filtration of $\Omega$ by compact sets $\mathcal{D}(N,\theta)$ such that $\eta$ is uniformly 

\subsection{Holonomy invariance}\label{subsec: holonomies}
Our first observation is that the existence of a measurable invariant conformal structure implies that the extremal Lyapunov exponents of $\mathcal{A}$ with respect to $\mu$ must be zero,  

\begin{prop}\label{zeroexponents}
If there is a measurable function $\eta: \Sigma \rightarrow SL(d,\R)/SO(d,\R)$ such that $A(x)[\eta_{x}] = \eta_{f(x)}$ for $\mu$-a.e. $x$ then   $\la_{+}(\mathcal{A},\mu) = -\la_{-}(\mathcal{A},\mu) = 0$. 
\end{prop}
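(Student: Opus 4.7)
The plan is to promote the measurable invariance of $\eta$ to a statement that $\mathcal{A}$ acts by isometries for a measurable family of inner products, and then use Poincar\'e recurrence to compare these with the Euclidean norm and force both extremal Lyapunov exponents to vanish.

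First, by the reduction of Section~\ref{subsec: reduction} we may assume $A$ takes values in $SL(d,\R)$, so the normalization factor in the $GL(d,\R)$-action on $\mathcal{C}^{d}$ is trivial and the invariance $A(x)[\eta_{x}] = \eta_{f(x)}$ becomes the genuine isometry statement
\[
\eta_{f(x)}(A(x)v, A(x)w) = \eta_{x}(v,w)
\]
for every $v,w \in \R^{d}$ and $\mu$-a.e.\ $x$. Iterating, $A^{n}(x)$ is an isometry from $(\R^{d},\eta_{x})$ to $(\R^{d},\eta_{f^{n}(x)})$ for every $n \in \Z$, on a full-measure $f$-invariant subset.

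Next, let $C : \Sigma \to [1,\infty)$ be the measurable, a.e.-finite distortion $C(x) = \max\{\|\eta_{x}^{1/2}\|,\|\eta_{x}^{-1/2}\|\}$ comparing $\eta_{x}$ to the Euclidean norm. The isometry property immediately gives
\[
\|A^{n}(x)\| \le C(x)\,C(f^{n}(x)), \qquad \|A^{n}(x)^{-1}\| \le C(x)\,C(f^{n}(x))
\]
for every $n \in \Z$. Choose $K$ large enough that $\Sigma_{K} := \{C \le K\}$ has positive $\mu$-measure; by Poincar\'e recurrence, for $\mu$-a.e.\ $x$ there is a sequence $n_{k} \to \infty$ with $f^{n_{k}}(x) \in \Sigma_{K}$. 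Along this subsequence,
\[
\frac{1}{n_{k}}\log \|A^{n_{k}}(x)\| \le \frac{\log K + \log C(x)}{n_{k}} \to 0,
\]
and, analogously, $\frac{1}{n_{k}}\log \|A^{n_{k}}(x)^{-1}\|^{-1} \ge -\frac{\log K + \log C(x)}{n_{k}} \to 0$.

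Finally, since $A$ is continuous on the compact space $\Sigma$, $\log\|A^{\pm 1}\| \in L^{1}(\mu)$, and Kingman's subadditive ergodic theorem gives that $\frac{1}{n}\log\|A^{n}(x)\|$ and $\frac{1}{n}\log\|A^{n}(x)^{-1}\|^{-1}$ converge $\mu$-a.e.\ to $\la_{+}(\mathcal{A},\mu)$ and $\la_{-}(\mathcal{A},\mu)$, respectively. Combining with the subsequence estimates above yields $\la_{+}(\mathcal{A},\mu) \le 0 \le \la_{-}(\mathcal{A},\mu)$, and since $\la_{-}(\mathcal{A},\mu) \le \la_{+}(\mathcal{A},\mu)$ holds automatically, both vanish. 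There is no serious obstacle: the only care needed is to intersect the full-measure sets on which invariance, the Kingman limits, and the recurrence to $\Sigma_{K}$ all hold simultaneously, which is routine.
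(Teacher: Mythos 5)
Your argument is correct. The core mechanism is the same as the paper's---recurrence to a positive-measure set on which $\eta$ is uniformly comparable to the Euclidean inner product forces subexponential growth of $\|A^{n}\|$ and $\|A^{n}{}^{-1}\|$---but the implementation differs. The paper takes a Lusin set $K$ on which $\eta$ is uniformly continuous, passes to the first return map $\hat f$ and the induced cocycle $\hat{\mathcal A}$, observes that $\hat{\mathcal A}$ has uniformly bounded norms so $\la_{+}(\hat{\mathcal A},\mu_{K})=0$, and then invokes the Abramov-type relation $\la_{+}(\hat{\mathcal A},\mu_{K})=\mu(K)^{-1}\la_{+}(\mathcal A,\mu)$ from the multiplicative ergodic theorem. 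You instead work with the sublevel set $\Sigma_{K}=\{C\le K\}$ of the measurable distortion $C(x)$, use recurrence (plus ergodicity of $\mu$, which the paper assumes, to get returns for a.e.\ $x\in\Sigma$ rather than just a.e.\ $x\in\Sigma_{K}$) to bound $\|A^{n_{k}}(x)\|$ and $\|A^{n_{k}}(x)^{-1}\|$ along a subsequence of times, and feed this into the a.e.\ convergence from Kingman's theorem; the identification of $\lim\frac1n\log\|A^{n}(x)^{-1}\|^{-1}$ with $\la_{-}(\mathcal A,\mu)$ is legitimate since $A^{-n}(f^{n}x)=(A^{n}(x))^{-1}$ and $\mu$ is $f$-invariant. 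What your route buys is economy: no induced cocycle and no appeal to the relation between exponents of the original and induced systems, only Kingman plus recurrence, with the small price that you must note the full-measure sets (invariance for all iterates, finiteness of $C$, Kingman limits, infinitely many returns) intersect, which you do. The paper's route packages the same recurrence bound as uniform boundedness of $\hat{\mathcal A}$, at the cost of justifying the exponent relation for induced cocycles.
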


\begin{proof}
By Lusin's theorem we can find a compact subset  $K \subseteq \Sigma$ with positive $\mu$-measure on which $\eta$ is uniformly continuous. Let $\hat{f}: K \rightarrow K$ be the first return map to $K$ and $\hat{\mathcal{A}}$ the induced cocycle over $\hat{f}$ on $K$ with generator $\hat{A}$. By continuity of $\eta$ restricted to $K$, there is a constant $C \geq 1$ such that for all $x \in K$ and $v \in \R^{d}$ we have
\[
C^{-1}\|v\|^{2} \leq \eta_{x}(v,v) \leq C\|v\|^{2}
\]
For every $x \in K$, every $n \geq 1$, and each $v \in \R^{d}$,  we know that 
\[
C^{-1}\|\hat{A}^{n}(x)(v)\|^{2} \leq \eta_{\hat{f}^{n}(x)}(\hat{A}^{n}(x)(v),\hat{A}^{n}(x)(v)) \leq C\|\hat{A}^{n}(x)(v)\|^{2}
\]
Combining this with the fact that for every $x \in K$ we have $\hat{A}_{*}^{n}\eta_{x} = \eta_{\hat{f}^{n}(x)}$, we conclude that for every $x \in K$ we have 
\[
\|\hat{A}^{n}(x)(v)\|^{2} \leq C^{2} \|v\|^{2},
\]
for every $n \geq 1$. Hence $\la_{+}(\hat{\mathcal{A}},\mu_{K}) = 0$, where $\mu_{K} = \mu(K)^{-1}\mu | K$ is the induced invariant probability measure for $\hat{f}$ on $K$. 

By a direct application of Oseledet's multiplicative ergodic theorem (see \cite{V14} for instance) the extremal exponents of the induced cocycle $\hat{\mathcal{A}}$ are related to those of $\mathcal{A}$ by the equation $\la_{+}(\hat{\mathcal{A}},\mu_{K}) = \mu(K)^{-1}\la_{+}(\mathcal{A},\mu)$. This implies that $\la_{+}(\mathcal{A},\mu) = 0$. 
\end{proof}

 Since $\la_{+}(\mathcal{A},\mu) = 0$, we conclude from \cite[Corollary 2.4]{V08} that for any given $\theta > 0$ and $\mu$-a.e. $x \in \Sigma$, there is an $N > 0$ such that $x \in \mathcal{D}(N,\theta)$. When $\theta < \tau$ the cocycle $\mathcal{A}$ carries additional structure over $\mathcal{D}(N,\theta)$, 

\begin{prop}[{\cite[Proposition 2.5]{V08}}]\label{existenceholonomies}
Given $N,\theta$ with $\theta < \tau$, there exists $L = L(N,\theta) > 0$ such that for any $x \in \mathcal{D}(N,\theta)$ and any $y,z \in W^{s}_{loc}(x)$, 
\[
H^{s}_{yz} = \lim_{n \rightarrow \infty} A^{n}(z)^{-1}A^{n}(y),
\]
exists and satisfies $\|H^{s}_{yz}-I\| \leq L \cdot \rho (y,z)$ and $H^{s}_{yz} = H^{s}_{xz} \circ H^{s}_{yx}$. Similarly, if $y,z \in W^{u}_{loc}(x)$ then the limit
\[
H^{u}_{yz} = \lim_{n \rightarrow \infty} A^{n}(z)A^{n}(y)^{-1},
\]
exists and satisfies $\|H^{u}_{yz}-I\| \leq L \cdot \rho (y,z)$ and $H^{u}_{yz} = H^{u}_{xz} \circ H^{u}_{yx}$. 
\end{prop}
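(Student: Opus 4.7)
The plan is to show that $M_n := A^n(z)^{-1}A^n(y)$ is a Cauchy sequence in $GL(d,\R)$ whose successive differences are of order $e^{(\theta-\tau)n}\rho(y,z)$; since $\theta < \tau$ the geometric series sums, yielding both the existence of $H^s_{yz}$ and the bound $\|H^s_{yz} - I\| \leq L\rho(y,z)$. The cocycle identity $H^s_{yz} = H^s_{xz}\circ H^s_{yx}$ will follow by passing to the limit in the factorization $A^n(z)^{-1}A^n(y) = [A^n(z)^{-1}A^n(x)][A^n(x)^{-1}A^n(y)]$, and the unstable statement is proven by the symmetric argument using $f^{-1}$ and $W^u_{loc}$ in place of $f$ and $W^s_{loc}$.

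The first step is a distortion bound: for every $x \in \mathcal{D}(N,\theta)$ and $n \geq 0$,
\[
\|A^n(x)\|\cdot\|A^n(x)^{-1}\| \leq C_0 e^{n\theta},
\]
with $C_0 = C_0(N,\theta)$. This is immediate from the definition of $\mathcal{D}(N,\theta)$ and sub-multiplicativity when $n$ is a multiple of $N$, and follows in general by absorbing the uniformly bounded factor $\|A^r(\cdot)\|\cdot\|A^r(\cdot)^{-1}\|$ for $0 \leq r < N$. This bound must then be propagated to $y \in W^s_{loc}(x)$: since $y$ and $x$ agree on nonnegative coordinates, $\rho(f^k(y), f^k(x)) \leq e^{-\tau k}\rho(y,x)$, and Lipschitz continuity of $A$ gives $A(f^k(y)) = (I + \varepsilon_k)A(f^k(x))$ with $\|\varepsilon_k\| \leq C e^{-\tau k}$. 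Setting $W_n := A^n(x)^{-1}A^n(y)$, a direct computation yields the recursion
\[
W_{n+1} = \bigl(I + A^{n+1}(x)^{-1}\varepsilon_n A^{n+1}(x)\bigr) W_n.
\]
The distortion bound at $x$ together with the decay of $\varepsilon_n$ makes the norm of the perturbation at most $C_0 C e^{(n+1)\theta - n\tau}$, which is summable. Hence $W_n$ converges and stays uniformly close to $I$, so the identities $A^n(y) = A^n(x)W_n$ and $A^n(y)^{-1} = W_n^{-1}A^n(x)^{-1}$ give $\|A^n(y)\| \leq K\|A^n(x)\|$ and $\|A^n(y)^{-1}\| \leq K\|A^n(x)^{-1}\|$ for a uniform $K$, and the same with $z$ in place of $y$.

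With these estimates we expand
\[
M_{n+1} - M_n = A^n(z)^{-1}\bigl[A(f^n(z))^{-1}A(f^n(y)) - I\bigr]A^n(y).
\]
The middle factor equals $A(f^n(z))^{-1}[A(f^n(y)) - A(f^n(z))]$, and has norm at most $C_2 e^{-\tau n}\rho(y,z)$ by the Lipschitz bound on $A$, the uniform bound on $\|A^{-1}\|$, and the stable contraction $\rho(f^n(y), f^n(z)) \leq e^{-\tau n}\rho(y,z)$. The outer factors satisfy $\|A^n(z)^{-1}\|\cdot\|A^n(y)\| \leq K^2\|A^n(x)\|\|A^n(x)^{-1}\| \leq K^2 C_0 e^{n\theta}$. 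Combining, $\|M_{n+1} - M_n\| \leq C_3 e^{(\theta-\tau)n}\rho(y,z)$; summing the resulting geometric series gives both $M_n \to H^s_{yz}$ and $\|H^s_{yz} - I\| \leq L\rho(y,z)$ with $L = L(N,\theta)$. The cocycle identity and unstable statement follow as described above.

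The main obstacle is the distortion propagation in step two: a naive telescoping expansion of $A^n(y) - A^n(x)$ produces terms involving $\|A^k(y)\|$, precisely the quantity one wants to bound, producing a circular argument. The conjugation $W_n = A^n(x)^{-1}A^n(y)$ breaks this circularity because its recursion only involves $A^{n+1}(x)^{-1}\varepsilon_n A^{n+1}(x)$, whose norm is controlled by the already established distortion bound at $x$ combined with the exponential decay of $\|\varepsilon_n\|$. The fact that the resulting series converges exactly when $\theta < \tau$ is the origin of the hypothesis in the statement and is the feature that allows one to treat $\mathcal{A}$ \emph{as if} it were fiber bunched along $\mathcal{D}(N,\theta)$.
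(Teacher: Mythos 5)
Your proposal is correct, and it is worth noting that the paper itself gives no proof of this proposition: it is imported verbatim from \cite[Proposition 2.5]{V08}, and your telescoping-plus-distortion argument is essentially the standard proof of that cited result, so your route and the intended one coincide. The key steps are all sound: the bound $\|A^{n}(x)\|\,\|A^{n}(x)^{-1}\|\leq C_{0}e^{n\theta}$ on $\mathcal{D}(N,\theta)$ (by submultiplicativity for $n$ a multiple of $N$ and absorbing the remainder), the conjugated recursion $W_{n+1}=\bigl(I+A^{n+1}(x)^{-1}\varepsilon_{n}A^{n+1}(x)\bigr)W_{n}$, which genuinely breaks the circularity you identify, and the estimate $\|M_{n+1}-M_{n}\|\leq C_{3}e^{(\theta-\tau)n}\rho(y,z)$, which is summable exactly because $\theta<\tau$ and yields both existence of the limit and the Lipschitz bound with $L=L(N,\theta)$; the cocycle identity then follows from the factorization through $x$ as you say. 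One detail should be made explicit: the phrase ``$W_{n}$ stays uniformly close to $I$'' does not by itself justify $\|A^{n}(y)^{-1}\|\leq K\|A^{n}(x)^{-1}\|$, since $\|W_{n}-I\|$ is only $O(\rho(y,x))$ with a constant that need not be less than $1$; instead run the same recursion for $W_{n}^{-1}=A^{n}(y)^{-1}A^{n}(x)$, using $(I+\varepsilon_{n})^{-1}=A(f^{n}(x))A(f^{n}(y))^{-1}=I+O(e^{-\tau n})$ and the distortion bound at $x$, which gives a uniform bound on $\|W_{n}^{-1}\|$ and hence the inverse comparison needed for the outer factors $\|A^{n}(z)^{-1}\|\,\|A^{n}(y)\|$. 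Finally, the unstable limit as printed, $\lim_{n\to\infty}A^{n}(z)A^{n}(y)^{-1}$, should be read as $\lim_{n\to\infty}A^{-n}(z)^{-1}A^{-n}(y)$, i.e.\ the stable holonomy of the inverse cocycle over $f^{-1}$, which is precisely what your symmetric argument produces; with that reading your proof is complete.
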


We refer to $H^{s}$ and $H^{u}$ as the stable and unstable holonomies of $\mathcal{A}$ respectively. We will show in this subsection that there is an $f$-invariant full measure subset $\Omega$ of $\Sigma$ on which $\eta$ is both $f$-invariant and holonomy invariant in the sense that a holonomy map between two points in $\Omega$ preserves $\eta$.  In the next lemma we show that $\eta$ is invariant under these stable and unstable holonomies on a full measure subset of $\Sigma$,

\begin{lem}\label{holonomyinvariance}
There is an $f$-invariant subset $\Omega$ of $\Sigma$, with $\mu(\Omega) = 1$, such that if $x,y \in \Omega$ and $y \in W^{*}_{loc}(x)$ then 
\[
H^{*}_{xy}[\eta_{x}] = \eta_{y},
\]
for $* \in \{s,u\}$, and furthermore for every $n \in \Z$ we have
\[
A^{n}(x)[\eta_{x}] = \eta_{f^{n}(x)},
\]
for every $x \in \Omega$. 
\end{lem}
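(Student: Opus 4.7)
The plan is to combine the $\mathcal{A}$-invariance of $\eta$ with a Lusin/Birkhoff argument, exploiting crucially that the $SL(d,\R)$-action on $\mathcal{C}^d$ is an isometry of the metric $g$. First I would establish the iteration clause of the lemma: iterating the $\mu$-a.e.\ relation $A(x)[\eta_x] = \eta_{f(x)}$ and intersecting over $n \in \Z$ gives a full-measure set, whose $f$-saturation $\Omega_0$ is $f$-invariant, full measure, and satisfies $A^n(x)[\eta_x] = \eta_{f^n(x)}$ for every $n \in \Z$ and every $x \in \Omega_0$.

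For the holonomy clause, fix $\theta \in (0,\tau)$ and set $\mathcal{D} := \bigcup_{N \geq 1} \mathcal{D}(N,\theta)$; by Proposition \ref{zeroexponents} together with \cite[Corollary 2.4]{V08}, $\mu(\mathcal{D}) = 1$, and the stable and unstable holonomies of Proposition \ref{existenceholonomies} are defined at every point of $\mathcal{D}$. For each $m \geq 1$, Lusin's theorem produces a compact $K_m \subseteq \Sigma$ with $\mu(K_m) > 1 - 2^{-m}$ on which $\eta$ is uniformly continuous with modulus $\omega_m$. Birkhoff's ergodic theorem applied to $f$ and $f^{-1}$ then yields an $f$-invariant, full-measure set $\Omega_m$ on which the forward and backward $f$-orbits each visit $K_m$ with asymptotic density $\mu(K_m)$. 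Take $\Omega := \Omega_0 \cap \mathcal{D} \cap \bigcap_{m \geq 1} \Omega_m$, which is $f$-invariant and has full $\mu$-measure.

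The main computation comes next. Given $x,y \in \Omega$ with $y \in W^s_{loc}(x)$, the $\mathcal{A}$-invariance on $\Omega$ and the isometry property give, for every $n \geq 1$,
\[
g\bigl(A^n(y)^{-1}A^n(x)[\eta_x],\, \eta_y\bigr) = g\bigl(A^n(y)^{-1}[\eta_{f^n(x)}],\, A^n(y)^{-1}[\eta_{f^n(y)}]\bigr) = g(\eta_{f^n(x)}, \eta_{f^n(y)}).
\]
As $n \to \infty$ the leftmost quantity converges to $g(H^s_{xy}[\eta_x], \eta_y)$ by Proposition \ref{existenceholonomies}, so it suffices to extract a subsequence along which the rightmost quantity tends to $0$. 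Since both $x$ and $y$ lie in $\Omega_m$, the set of $n$ with $f^n(x), f^n(y) \in K_m$ has lower density at least $1 - 2^{1-m}$ and so is infinite for $m \geq 2$. Along such $n$, the exponential contraction $\rho(f^n(x), f^n(y)) \leq e^{-n\tau}\rho(x,y)$ combined with uniform continuity of $\eta$ on $K_m$ gives $g(\eta_{f^n(x)}, \eta_{f^n(y)}) \leq \omega_m(e^{-n\tau}\rho(x,y)) \to 0$. Hence $g(H^s_{xy}[\eta_x], \eta_y) = 0$, i.e. $H^s_{xy}[\eta_x] = \eta_y$. The unstable case is identical after replacing $f$ by $f^{-1}$ throughout and invoking the backward-time condition in the definition of $\mathcal{D}(N,\theta)$.

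The principal obstacle is the joint-return step: one needs times $n_k \to \infty$ at which both orbits lie in the Lusin set simultaneously, so that $\eta_{f^{n_k}(x)}$ and $\eta_{f^{n_k}(y)}$ can be compared by uniform continuity before the operator $A^{n_k}(y)^{-1}$ distorts them beyond control. This is exactly where the ergodicity of $\mu$ (through intersection of two positive-density Birkhoff sets) is used, and it is also why we must embed $\Omega$ inside $\mathcal{D}$: the constraint $\theta < \tau$ ensures both the existence of the holonomy limits and that the cocycle distortion on $\mathcal{D}(N,\theta)$ stays below the contraction rate $\tau$ built into the metric $\rho = \rho_\tau$.
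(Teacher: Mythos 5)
Your argument is correct, and while it draws on the same toolkit as the paper (Lusin set, Birkhoff visit densities, contraction along local stable sets, and the isometric action on $\mathcal{C}^{d}$), the way you arrange it is genuinely different and in fact leaner. The paper fixes one $\mathcal{D}(N,\theta)$ of measure $>9/10$, passes to the first return map $\hat f$, pushes the comparison forward to return points $x^{n_i},y^{n_i}$ in $E=\mathcal{D}(N,\theta)\cap K$, and must then kill the extra term $g(\eta(x^{n_i}),H^{s}_{x^{n_i}y^{n_i}}[\eta(x^{n_i})])$; that step costs the intertwining of holonomies with the cocycle, the uniform bound $\|H^{s}_{yz}-I\|\leq L\rho(y,z)$ at the return points, and boundedness of $\eta$ on the Lusin set. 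You instead pull back: the identity $g\bigl(A^{n}(y)^{-1}A^{n}(x)[\eta_{x}],\eta_{y}\bigr)=g(\eta_{f^{n}(x)},\eta_{f^{n}(y)})$, valid for every $n$ by invariance plus isometry, reduces the claim to producing simultaneous visit times of the two forward orbits to a single Lusin set, and the finite-time approximants $A^{n}(y)^{-1}A^{n}(x)$ converge to $H^{s}_{xy}$ by Proposition \ref{existenceholonomies} applied at the fixed point $x$ alone; so you need neither boundedness of $\eta$, nor any holonomy estimate at the far end, nor the induced-map bookkeeping (which in the paper also obscures whether $x$ and $y$ have the same return times to $\mathcal{D}(N,\theta)$ -- your simultaneous times are honest simultaneous $f$-times, at which you only compare $\eta$ at two nearby points of $K_m$). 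The one thing to patch: $\mathcal{D}=\bigcup_{N}\mathcal{D}(N,\theta)$ is not obviously $f$-invariant, so your $\Omega$ is not obviously $f$-invariant as claimed; simply replace it at the end by $\bigcap_{n\in\Z}f^{n}(\Omega)$, which still has full measure, and note that both the holonomy-invariance statement (a statement about pairs in the set) and the $\mathcal{A}$-invariance clause pass to subsets -- this is exactly the intersection trick the paper applies to its set $\Omega'$.
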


\begin{proof}
We follow \cite[Proposition 4.4]{KS}, with minor adjustments to account for the fact that the holonomies $H^{s}$ and $H^{u}$ do not necessarily vary continuously on $\Sigma$. Fix $N > 0$ and $\theta < \tau$ such that $\mu(\mathcal{D}(N,\theta)) > \frac{9}{10}$. 

By Lusin's theorem we can find a compact subset $K \subset \Sigma$ with $\mu(K) > \frac{9}{10}$ on which $\eta$ is uniformly continuous and thus bounded. We note that 
\[
\frac{\mu(\mathcal{D}(N,\theta) \cap K)}{\mu(\mathcal{D}(N,\theta))} > \frac{4}{5}.
\]
Set $E = \mathcal{D}(N,\theta) \cap K$. Let $\hat{f}$ denote the first return map to $\mathcal{D}(N,\theta)$. Let $S \subset \mathcal{D}(N,\theta)$ be the subset of points of $\mathcal{D}(N,\theta)$ for which the frequency of visits to $E$ under iteration of $\hat{f}^{n}$ converges to $\frac{\mu(E)}{\mathcal{D}(N,\theta)} > \frac{1}{2}$ as $n \rightarrow \infty$. We have $\mu(S) = \mu(\mathcal{D}(N,\theta))$ by the Birkhoff ergodic theorem since $\hat{f}$ is ergodic. Hence for $\mu$-a.e. $x,y \in \mathcal{D}(N,\theta)$ with $y \in W^{s}_{loc}(x)$ there are infinitely many $n \geq 1$ such that both $\hat{f}^{n}(x)$ and $\hat{f}^{n}(y) \in E$. Fix any pair of such points $x,y$ with $y \in W^{s}_{loc}(x)$ and with the $A$-invariance property of $\eta$ for all iterates, and let $n_{i} \rightarrow \infty$ be a sequence with $x^{n_{i}}:=\hat{f}^{n_{i}}(x)$,  $y^{n_{i}}:= \hat{f}^{n_{i}}(y)\in E$ for each $i$. By $\hat{A}$-invariance of $\eta$ on the orbits of $x$ and $y$ and since pushing forward by $\hat{A}$ is an isometry with respect to the metric $g$, 
\begin{align*}
g(\eta(y), H^{s}_{xy}[\eta(x)]) &= g(\hat{A}^{n_{i}}(y)[\eta(y)], \hat{A}^{n_{i}}(y)H^{s}_{xy}[\eta(x)])\\
&= g(\eta(y^{n_{i}}), H^{s}_{x^{n_{i}}y^{n_{i}}}[\eta(x^{n_{i}})]) \\
&\leq g(\eta(y^{n_{i}}), \eta(x^{n_{i}})) + g(\eta(x^{n_{i}}), H^{s}_{x^{n_{i}}y^{n_{i}}}[\eta(x^{n_{i}})])
\end{align*}
Since $\eta$ is uniformly continuous on $E$, 
\[
g(\eta(x^{n_{i}}), \eta(y^{n_{i}})) \rightarrow 0,
\]
as $n_{i} \rightarrow \infty$. Also as $n_{i} \rightarrow \infty$, $H^{s}_{x^{n_{i}}y^{n_{i}}}$ converges uniformly to the identity map $I$ on $\R^{d}$. Since $\eta$ is bounded on $E$, this implies that 
\[
g(\eta(x^{n_{i}}), H^{s}_{x^{n_{i}}y^{n_{i}}}[\eta(x_{n_{i}})]) \rightarrow 0 \;\;\text{as $n_{i} \rightarrow \infty$}.
\]
We conclude that $\eta(y) = H^{s}_{xy}[\eta(x)]$, as desired. This holds for $\mu$-a.e. $x,y \in \mathcal{D}(N,\theta)$ with $y \in W^{s}_{loc}(x)$. Note that if $s \geq 1$ then $\mathcal{D}(N,\theta) \subset \mathcal{D}(sN,\theta)$. Taking the union over all $N > 0 $ of the sets $\mathcal{D}(N,\theta)$ for a fixed $\theta < \tau$ with $\mu(\mathcal{D}(N,\theta)) > \frac{9}{10}$ and also applying this argument replacing $H^{s}$ by $H^{u}$, we obtain a full $\mu$-measure subset $\Omega'$ of $\Sigma$ such that if $x,y \in \Omega$ and $y \in W^{*}_{loc}(x)$ then the holonomy $H^{*}_{xy}$ is defined according to Proposition \ref{existenceholonomies} and $\eta(y) = H^{*}_{xy}[\eta(x)]$ for $* \in \{s,u\}$. By intersecting $\Omega'$ with the full measure set on which the equality $A(x)[\eta_{x}] = \eta_{f(x)}$ holds we can arrange that this $A$-invariance property holds on $\Omega'$. We then set $\Omega = \cap_{n \in \Z} f^{n}(\Omega')$ to obtain the desired $f$-invariance property of $\Omega$. 
\end{proof}

\subsection{Extending $\eta$}\label{subsec: extension} For each $N > 0$ and $\theta < \tau$, the stable and unstable holonomies $H^{s}$ and $H^{u}$ exist and vary continuously on $\mathcal{D}(N,\theta)$. By construction $\eta$ is defined on $\Omega \cap \mathcal{D}(N,\theta)$ and invariant under the stable and unstable holonomies. By Proposition \ref{existenceholonomies} the restrictions of $H^{s}$ and $H^{u}$ to $\mathcal{D}(N,\theta)$ are Lipschitz with Lipschitz constant depending only on $N$ and $\theta$. Hence $\eta$ is uniformly continuous on $\Omega \cap \mathcal{D}(N,\theta)$. We would like to construct a continuous extension $\hat{\eta}$ of $\eta$ to $\mathcal{D}(N,\theta)$ which agrees $\mu$-a.e. with $\eta$, is invariant under $H^{s}$ and $H^{u}$ on all of $\mathcal{D}(N,\theta)$, and such that if $x, f^{n}(x) \in \mathcal{D}(N,\theta)$ then $A^{n}(x)[\hat{\eta}_{x}] = \hat{\eta}_{f^{n}(x)}$.

The natural choice for such an extension, since $\eta$ is uniformly continuous on $\Omega \cap \mathcal{D}(N,\theta)$, is to take the unique extension of $\eta$ to the closure $\overline{\Omega \cap \mathcal{D}(N,\theta)}$. However this may be a proper subset of $\mathcal{D}(N,\theta)$. The best we can guarantee is that 
 \[
 \text{supp}(\mu | \mathcal{D}(N,\theta)) \subseteq \overline{\Omega \cap \mathcal{D}(N,\theta)} \subseteq \mathcal{D}(N,\theta)
 \]
We overcome this issue using Lemma \ref{Lipschitzapprox} below which is one of the main steps in the proof of Proposition \ref{periodiczero}. This lemma makes critical use of the local product structure of $\mu$. 

\begin{lem}\label{Lipschitzapprox}
For each $N > 0$ and $\theta < \tau$ there is an $N_{*} \geq N$ and $ \theta \leq \theta_{*} < \tau$ such that 
\[
\mathcal{D}(N,\theta) \subseteq  \text{supp}(\mu | \mathcal{D}(N_{*},\theta_{*}))
\]
\end{lem}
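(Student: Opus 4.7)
The plan is to use the local product structure of $\mu$ to construct, for each $x \in \mathcal{D}(N,\theta)$ and each rectangle neighborhood $U$ of $x$, a positive $\mu$-measure subset of $U \cap \mathcal{D}(N_*,\theta_*)$, with $N_*$ and $\theta_*$ chosen uniformly in $x$ and $U$. Set $\theta_* = (\theta+\tau)/2 < \tau$. By Proposition~\ref{zeroexponents}, $\la_+(\mathcal{A},\mu) = 0$, so by \cite[Cor.~2.4]{V08}, for any fixed $\theta_0 \in (0,\tau)$, $\mu$-a.e.\ $y$ lies in $\mathcal{D}(M,\theta_0)$ for some $M$; hence for any $\delta > 0$ we may fix $M_0$ with $\mu(\mathcal{D}(M_0,\theta_*/2)) > 1-\delta$.

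The first ingredient is a perturbation estimate. For $y,\tilde{y} \in \Sigma$ with $\pi_u(y) = \pi_u(\tilde{y})$, one has $y \in W^s_{loc}(\tilde{y})$ and $\rho(f^{jN_*}(y), f^{jN_*}(\tilde{y})) \leq e^{-jN_*\tau}$ for all $j \geq 0$. Lipschitz continuity of $A$ together with $\|A^{N_*}(\cdot)\| \geq 1$ (which follows from $\det(A) = 1$) then gives that the forward products in the definition of $\mathcal{D}(N_*,\cdot)$ for $y$ and $\tilde{y}$ differ by a constant $C(N_*)$ independent of $s$. Thus if $\tilde{y}$ satisfies the forward condition of $\mathcal{D}(N_*,\theta_*/2)$, then $y$ satisfies the forward condition of $\mathcal{D}(N_*,\theta_*)$, provided $C(N_*) \leq N_*\theta_*/2$, which holds for $N_*$ large. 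An analogous statement for backward products and $\pi_s$ yields that, with $N_* \geq M_0$ taken sufficiently large, the set
\[
T := U \cap \pi_s^{-1}(\pi_s(\mathcal{D}(M_0,\theta_*/2))) \cap \pi_u^{-1}(\pi_u(\mathcal{D}(M_0,\theta_*/2)))
\]
is contained in $\mathcal{D}(N_*,\theta_*)$.

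The second ingredient is the local product structure. On a rectangle $U$, $\mu|U = \xi\cdot(\mu^s|\pi_s(U))\otimes(\mu^u|\pi_u(U))$ with $\xi$ positive and continuous, so $\mu(T)$ factorizes as an integral of $\xi$ over the product of the two marginal intersections and is positive iff both $\mu^s(\pi_s(U)\cap\pi_s(\mathcal{D}(M_0,\theta_*/2)))$ and $\mu^u(\pi_u(U)\cap\pi_u(\mathcal{D}(M_0,\theta_*/2)))$ are positive. Since projections do not decrease measure, both $\mu^s(\pi_s(\mathcal{D}(M_0,\theta_*/2)))$ and $\mu^u(\pi_u(\mathcal{D}(M_0,\theta_*/2)))$ exceed $1-\delta$, so for $\delta$ small compared to $\mu^s(\pi_s(U))$ and $\mu^u(\pi_u(U))$ (which are positive since $\mu^{s},\mu^u$ are fully supported), we obtain $\mu(T) > 0$.

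The main obstacle is the uniformity of $N_*$ (and thus $\delta$) over all $x \in \mathcal{D}(N,\theta)$ and all arbitrarily small neighborhoods $U$ of $x$, since $\mu^{s,u}(\pi_{s,u}(U))$ can shrink to zero as $U \to \{x\}$. To handle this, one exploits the Jacobians from Proposition~\ref{jacobians} to control the distortion of $\mu^s,\mu^u$ under iterates of $f_s^{-1}, f_u^{-1}$, together with compactness of $\mathcal{D}(N,\theta)$, to show that $\pi_s(\mathcal{D}(M_0,\theta_*/2))$ charges every cylinder containing $\pi_s(x)$ with a proportion bounded below uniformly (and similarly for $\pi_u$). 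This uniform density statement, combined with the product factorization, yields the required uniform choice of $N_*$.
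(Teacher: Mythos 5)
Your overall skeleton (control the forward condition through the unstable coordinate, the backward condition through the stable coordinate, and combine with the local product structure of $\mu$) is the same as the paper's, but the proposal has a genuine gap exactly at the point you flag as "the main obstacle," and the sketched fix does not close it. The set $G=\mathcal{D}(M_0,\theta_*/2)$ having measure $>1-\delta$ tells you nothing about whether $\pi_u(G)$ charges the small cylinders $[0;x_0,\dots,x_m]^u$ around $\pi_u(x)$ for an \emph{arbitrary} $x\in\mathcal{D}(N,\theta)$: density of a fixed positive-measure set is guaranteed only at $\mu^u$-a.e.\ point, and the points one actually needs later (periodic points) are exactly points of measure zero, near which the complement of $\pi_u(G)$ could contain entire cylinders. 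Note also that your ingredients never use the hypothesis $x\in\mathcal{D}(N,\theta)$; if they sufficed, they would show that \emph{every} point of $\Sigma$ lies in $\mathrm{supp}(\mu\,|\,\mathcal{D}(N_*,\theta_*))$, i.e.\ $\mathcal{D}(N_*,\theta_*)=\Sigma$ (fiber bunching), which cannot be established at this stage of the argument. The paper avoids this by pulling the good set back dynamically: it works with $K^u_m(x)=f_u^{-m}(K)\cap[0;x_0,\dots,x_m]^u$, whose positive measure follows from Proposition~\ref{jacobians}, so positivity in arbitrarily small cylinders is automatic; the price is that points of $\pi_u^{-1}(K^u_m(x))$ enjoy the good Birkhoff bounds only after time $m$, and one must re-verify the $\mathcal{D}(N_*,\theta_*)$ inequalities at \emph{all} scales $s$ by splitting each partial sum into the initial segment where the orbit shadows $x$ (controlled precisely because $x\in\mathcal{D}(N,\theta)$, plus a coboundary/Lipschitz error via $\psi^u=\psi+\varphi\circ f-\varphi$), the crossover window, and the Egorov-uniform tail, absorbing the crossover by taking the block length $N_*=NR$ large. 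That three-regime estimate is the technical core of the lemma and is absent from your proposal.

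Separately, your first ingredient is not justified as stated: the constant $C(N_*)$ comes from the Lipschitz constant of $z\mapsto\log\left(\|A^{N_*}(z)\|\,\|A^{N_*}(z)^{-1}\|\right)$, which without any fiber-bunching hypothesis can grow exponentially in $N_*$, so the inequality $C(N_*)\le N_*\theta_*/2$ "for $N_*$ large" does not follow from Lipschitz continuity of $A$ alone. To transfer the defining inequalities across $W^s_{loc}$ one either needs the domination along the orbit of the good point $\tilde y$ (i.e.\ $\tilde y\in\mathcal{D}(M_0,\theta_*/2)$ together with uniform bounds on the partial products $A^n(y)^{-1}A^n(\tilde y)$ as in Proposition~\ref{existenceholonomies}), or the paper's device of comparing Birkhoff sums of the \emph{fixed} $N$-block function $\psi$ along the two orbits and only at the end passing to $N_*$-blocks by submultiplicativity; as written, neither mechanism is invoked.
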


\begin{proof}
Let $N > 0$, $\theta < \tau$ be given such that 
\[
\frac{1}{N}\int_{\Sigma}\log \|A^{N}(x)\| \cdot \|A^{N}(x)^{-1}\| \,d\mu < \theta,
\]
and set $\psi(x) = \frac{1}{N}\log \|A^{N}(x)\| \cdot \|A^{N}(x)^{-1}\|$. For any fixed $\theta > 0$ this inequality can always be arranged by taking $N$ large enough since $\la_{+}(\mathcal{A},\mu) = 0$. Set $S_{n}\psi(x) = \sum_{j=0}^{n-1}\psi(f(^{jN}(x)))$ and observe that if $x \in \mathcal{D}(N,\theta)$ then $\frac{S_{n}\psi(x)}{n} \leq \theta$. 

Let $x =(x_{n})_{n \in \Z}\in \mathcal{D}(N,\theta)$ be given and let $0 < \gamma < \frac{1}{10}(\tau-\theta)$. Choose points $z^{i}$, $1 \leq i \leq \l$, in the cylinders $[0;i]$ and for $y \in [0;i]$ recall that $[z^{i},y]$ denotes the unique point in the intersection $W^{s}_{loc}(y) \cap W^{u}_{loc}(z_{i})$. We set $\psi^{u}(y) = \psi([z^{i},y])$ for $y \in [0;i]$, $1 \leq i \leq \l$, and note that 
\[
\psi^{u} = \psi + \varphi \circ f - \varphi
\]
where $\varphi(y) = \sum_{k=0}^{\infty}\psi(f^{k}(y))-\psi(f^{k}([z^{i},y]))$ for $y \in [0;i]$. Since $\psi$ is Lipschitz with respect to the distance $\rho$ and $[z^{i},y] \in W^{s}_{loc}(y)$, standard distortion estimates show that the sum defining $\varphi$ converges uniformly and hence $\varphi$ is continuous and bounded on $\Sigma$. We similarly set $S_{n}\psi^{u}(x) = \sum_{j=0}^{n-1}\psi^{u}(f(^{jN}(x)))$ and note that
\[
\left\|\frac{S_{n}\psi^{u} }{n}- \frac{S_{n}\psi}{n}\right\|_{\infty} \leq \frac{2}{n}\|\varphi\|_{\infty}. 
\]
The function $\psi^{u}: \Sigma \rightarrow \R$ is constant on local stable sets and hence descends to a map $\psi^{u}: \Sigma^{u} \rightarrow \R$. We observe that 
\[
\int_{\Sigma^{u}}\psi^{u}\,d\mu^{u} = \int_{\Sigma}\psi\,d\mu < \theta
\]
By Egorov's theorem, given any $\delta > 0$ we can find $K \subset \Sigma^{u}$ with $\mu^{u}(\Sigma \backslash K) > \delta$ and such that $\frac{S_{n}\psi^{u} }{n}$ converges uniformly to $\int_{\Sigma^{u}}\psi^{u}\,d\mu^{u}$ on $K$. Choose $K$ corresponding to a $\delta$ with $\delta < \inf_{1 \leq i \leq \l} \mu([0;i])$; this forces $K \cap [0;i] \neq \emptyset$ for each $1\leq i\leq \l$. 

For each $m \geq 0$ let 
\[
K_{m}^{u}(x) = f_{u}^{-m}(K) \cap [0;x_{0},x_{1},\dots,x_{m}]^{u}
\]
By proposition \ref{jacobians} together with the fact that $K \cap [0;i] \neq \emptyset$ for each $1\leq i\leq \l$ we have $\mu^{u}(K_{m}^{u}(x)) > 0$ for every $m \geq 0$. 

Choose $M$ large enough that for $n \geq M$ we have for every $w \in K$,
\[
\frac{S_{n}\psi^{u}(w)}{n} < \int_{\Sigma^{u}}\psi^{u}\,d\mu^{u}  + \gamma < \theta + \gamma 
\]
Now let $y \in \pi_{u}^{-1}(K_{m}^{u}(x))$ for some $m \geq 0$. For $n \geq 1$ we write 
\[
\frac{S_{n}\psi(y)}{n} = \frac{S_{m}\psi(y)-S_{m}\psi(x)}{n} +  \frac{S_{m}\psi(x) }{n}+\frac{S_{n-m}\psi^{u}(f^{m}(y))}{n}
\]
with the understanding that $S_{n-m}\psi(f^{m}(y)):= 0$ if $n \leq m$. We bound each of the three terms on the right side separately. For the first term we have
\begin{align*}
\frac{S_{m}\psi(y)-S_{m}\psi(x)}{n} &\leq \frac{S_{m}\psi^{u}(y)-S_{m}\psi^{u}(x)}{n} + \frac{2}{n}\|\varphi\|_{\infty} \\
&\leq \frac{\text{Lip}(\psi^{u})}{n}\left(\sum_{k=0}^{m-1}\rho(f^{kN}(x),f^{kN}(y))\right) + \frac{2}{n}\|\varphi\|_{\infty}  \\
&\leq \frac{1}{n}\left(\frac{\text{Lip}(\psi^{u})}{1-e^{-\tau}}+ 2\|\varphi\|_{\infty}\right)
\end{align*}
where $\text{Lip}(\psi^{u})$ denotes the Lipschitz constant of $\psi^{u}$. For the second term we use the bound 
\[
\frac{S_{m}\psi(x) }{n} \leq \left\{ \begin{array}{cc} \frac{N}{n} \|\psi\|_{\infty}, & m \leq N \\
\frac{m}{n}\theta, & m > N \end{array} \right.
\]
Similarly, for the third term we use the bound 
\[
\frac{S_{n-m}\psi^{u}(f^{m}(y)) }{n} \leq \left\{ \begin{array}{cc} \frac{M}{n} \|\psi^{u}\|_{\infty}, & n-m \leq M \\
\frac{n-m}{n}(\theta + \gamma) , & m > N \end{array} \right.
\]
Choose $R$ large enough that 
\[
\frac{1}{R}\left(\frac{\text{Lip}(\psi^{u})}{1-e^{-\tau}}+ 2\|\varphi\|_{\infty}\right) < \gamma
\]
and also large enough that $\frac{N}{R}\|\psi\|_{\infty} < \gamma$ and $\frac{M}{R}\|\psi^{u}\|_{\infty} < \gamma$.  We conclude by combining all of the above estimates that for any $m \geq 1$, $x \in \mathcal{D}(N,\theta)$,  $y \in \pi_{u}^{-1}(K_{m}^{u}(x))$ and $n \geq R$ we have
\[
\frac{S_{n}\psi(y)}{n} < \theta+ 3\gamma.
\]
By our choice of $\gamma$ we have $\theta + 3\gamma < \tau$, so we can set $\theta_{*} = \theta + 3\gamma$. We conclude that for every $m \geq 1$,  $x \in \mathcal{D}(N,\theta)$, $y \in \pi_{u}^{-1}(K_{m}^{u})(x)$ and $k \geq 1$ we have 
\[
\prod_{j = 0}^{k-1} \left\|A^{N R}(f^{ j N R}(y))\right\| \left\|A^{N R}(f^{j N R}(y))^{-1}\right\| \leq e^{N \cdot S_{kR}\psi(y)} \leq e^{k N R\theta_{*}}.
\]
We may replace $f$ and $A$ everywhere in the above proof by $f^{-1}$ and $A^{-1}$ and run the same arguments again with $u$ everywhere replaced by $s$. We obtain an $R' \geq N$ and an analogous sequence of subsets 
\[
K_{m}^{s}(x) \subset [-m;x_{-m},x_{-m+1},\dots,x_{-1}]^{s},
\] 
of $\Sigma^{s}$ with $\mu^{s}(K_{m}^{s}(x)) > 0$ for every $m \geq 1$ such that for $m \geq 1$, $x \in \mathcal{D}(N,\theta)$, $y \in \pi_{s}^{-1}(K_{m}^{s})(x)$ and $k \geq 1$ we have 
\[
\prod_{j = 0}^{k-1} \left\|A^{-N R'}(f^{- j N R'}(y))\right\| \left\|A^{-N R'}(f^{-j N R'}(y))^{-1}\right\| \leq e^{N \cdot S_{k R'}\psi(y)} \leq e^{k N R'\theta_{*}}.
\]
Let $N_{*} = N \cdot \text{max}(R, R')$ and for $m \geq 1$ let 
\[
E_{m}(x) = \pi_{s}^{-1}(K_{m}^{s})(x) \cap \pi_{u}^{-1}(K_{m}^{u})(x)
\]
We conclude that if $x \in \mathcal{D}(N,\theta)$ and $y \in E_{m}(x)$ for some $m \geq 1$ then $y \in \mathcal{D}(N_{*},\theta_{*})$. 

Let $x \in \mathcal{D}(N,\theta)$. From the local product structure of the measure $\mu$ on the cylinder $[0;x_{0}]$ it follows that 
\[
\mu\left(E_{m}(x)\right) \geq \left(\inf_{\Sigma}\xi\right)\mu^{u}(K_{m}^{u}(x)) \mu^{s}(K_{m}^{s}(x)) > 0
\]
for every $m \geq 1$. Furthermore it is easy to check from the construction that $E_{m}(x)$ is contained in the $\rho$-ball of radius $e^{-m\tau}$ centered at $x$ in $\Sigma$. Since $E_{m}(x) \subset \mathcal{D}(N_{*},\theta_{*})$ for every $m \geq 1$ and the $\rho$-balls of radius $e^{-m\tau}$ form a neighborhood basis of $x$ as $m \rightarrow \infty$, we conclude that $x \in \text{supp}(\mu | \mathcal{D}(N_{*},\theta_{*}))$ as desired. 
\end{proof} 

We are now able to construct the desired extension $\hat{\eta}$ of $\eta$ to $\mathcal{D}(N,\theta)$ for each $N > 0$ large enough and $\theta < \tau$. Fix $\theta < \tau$ and take $N$ large enough that for $1 \leq i \leq \l$ we have $\mu(\mathcal{D}(N,\theta) \cap [0;i]) > 0$. Choose points $\omega^{i} \in \Omega\cap \mathcal{D}(N,\theta)  \cap [0;i]$ for $1 \leq i \leq \l$. For any point $x  \in \mathcal{D}(N, \theta) \cap [0;i]$ we then define
\[
\hat{\eta}_{x} = H^{s}_{[\omega^{i}, x] x}H^{u}_{\omega^{i} [\omega^{i}, x] }[\eta_{\omega^{i}}]
\]
\begin{lem}\label{extension}
For each $\theta < \tau$ and $N > 0$, $\hat{\eta}$ is uniformly continuous on $\mathcal{D}(N,\theta)$. If $x \in \mathcal{D}(N,\theta)$ then for any $y \in W^{*}_{loc}(x)$ we have $H^{*}_{xy}[\hat{\eta}_{x}] = \hat{\eta}_{y}$ for $* \in \{s,u\}$ and 
\[
A(x)[\hat{\eta}_{x}] = \hat{\eta}_{f(x)}
\]
\end{lem}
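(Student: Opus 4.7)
The plan is to verify each of the three claims by combining uniform control of the holonomies on $\mathcal{D}(N,\theta)$ from Proposition \ref{existenceholonomies} with a density argument afforded by Lemma \ref{Lipschitzapprox}. First I establish uniform continuity of $\hat{\eta}$. Since the cylinders $[0;i]$ are clopen, it suffices to work on each $\mathcal{D}(N,\theta)\cap[0;i]$ separately. There the bracket map $x\mapsto[\omega^i,x]$ is $\rho$-nonexpanding (it simply keeps the nonnegative coordinates of $x$ and substitutes in the negative coordinates of $\omega^i$), and by Proposition \ref{existenceholonomies} the holonomies $H^s,H^u$ are Lipschitz in the variable endpoint with constant $L(N,\theta)$ and depend continuously on the base point via the uniform convergence of their defining limits on $\mathcal{D}(N,\theta)$. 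Composing three continuous maps gives uniform continuity.

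Next I identify $\hat{\eta}$ with $\eta$ on a $\mu$-conull subset $\Omega_0\subseteq\mathcal{D}(N,\theta)$. Applying Fubini to the local product representation $\mu|[0;i]=\xi\cdot(\mu^s|\pi_s([0;i])\times\mu^u|\pi_u([0;i]))$, for $\mu$-a.e.\ $x\in[0;i]$ both $x\in\Omega$ and $[\omega^i,x]\in\Omega$. For such $x$, two applications of Lemma \ref{holonomyinvariance} give
$$H^u_{\omega^i[\omega^i,x]}[\eta_{\omega^i}]=\eta_{[\omega^i,x]},\qquad H^s_{[\omega^i,x]\,x}[\eta_{[\omega^i,x]}]=\eta_x,$$
so $\hat{\eta}_x=\eta_x$. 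To convert this a.e.\ equality into a density statement I invoke Lemma \ref{Lipschitzapprox} to obtain $N_*\geq N$ and $\theta_*<\tau$ with $\mathcal{D}(N,\theta)\subseteq\mathrm{supp}(\mu|\mathcal{D}(N_*,\theta_*))$. Since $\mathcal{D}(N,\theta)\subseteq\mathcal{D}(N_*,\theta_*)$ by submultiplicativity, running the same construction with parameters $(N_*,\theta_*)$ and base points $\omega^i\in\Omega\cap\mathcal{D}(N_*,\theta_*)$ extends $\hat{\eta}$ continuously to $\mathcal{D}(N_*,\theta_*)$ and restricts to the original map. The corresponding $\Omega_0$ is then dense in $\mathcal{D}(N,\theta)$.

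For the $H^s$-invariance, fix $x_0\in\mathcal{D}(N,\theta)$ and $y_0\in W^s_{loc}(x_0)\cap\mathcal{D}(N,\theta)$; these share a common cylinder $[0;i]$ and satisfy $\pi_u(x_0)=\pi_u(y_0)=:u_0$. A further Fubini argument yields $\mu^u$-a.e.\ $u$ for which the slice $\{s:(s,u)\in\Omega_0\}$ is $\mu^s$-conull, hence dense in $\pi_s([0;i])$. Choosing $u_n\to u_0$ along such $u$'s and then $s^x_n\to\pi_s(x_0)$, $s^y_n\to\pi_s(y_0)$ within the corresponding slices produces $x_n,y_n\in\Omega_0$ with $y_n\in W^s_{loc}(x_n)$, $x_n\to x_0$, $y_n\to y_0$. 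Lemma \ref{holonomyinvariance} gives $H^s_{x_ny_n}[\hat{\eta}_{x_n}]=\hat{\eta}_{y_n}$, and continuity of $\hat{\eta}$ and $H^s$ yield $H^s_{x_0y_0}[\hat{\eta}_{x_0}]=\hat{\eta}_{y_0}$ in the limit. The $H^u$-case is symmetric (fix $\pi_s$ and vary $\pi_u$), and the cocycle equation $A(x)[\hat{\eta}_x]=\hat{\eta}_{f(x)}$ follows analogously by approximating $x_0$ with $x_n\in\Omega_0\cap f^{-1}(\Omega_0)$ and using $\mu$-a.e.\ $\mathcal{A}$-invariance of $\eta$. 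The main obstacle is precisely this joint approximation step: one must produce sequences in $\Omega_0\times\Omega_0$ approximating a given pair $(x_0,y_0)$ while \emph{preserving} the stable or unstable leaf relation between them, and mere density of $\Omega_0$ is not sufficient. It is the local product structure of $\mu$, expressed through the Fubini decomposition in $(s,u)$-coordinates, that makes this possible, which is why Lemma \ref{Lipschitzapprox} (whose own proof hinges on this product structure) is indispensable at this stage.
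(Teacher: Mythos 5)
Your overall strategy (identify $\hat{\eta}$ with $\eta$ on a large subset of $\mathcal{D}(N_{*},\theta_{*})$, use Lemma \ref{Lipschitzapprox} for density, and pass the a.e.\ invariances to the limit using uniform control of the holonomies) is the right one, and you correctly isolate the main obstacle: approximating a leaf-related pair $(x_{0},y_{0})$ by leaf-related pairs in $\Omega_{0}$. But your resolution of that obstacle fails. The set $\Omega_{0}$ on which you know $\hat{\eta}=\eta$ is only conull \emph{relative to} $\mathcal{D}(N_{*},\theta_{*})$; it cannot be conull in $[0;i]$, since the identification $\hat{\eta}_{x}=\eta_{x}$, the continuity of $\hat{\eta}$, and the uniform bounds on $H^{s},H^{u}$ needed in the limit all require staying inside a fixed $\mathcal{D}(N',\theta')$ with $\theta'<\tau$. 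Since $\mathcal{D}(N_{*},\theta_{*})$ is in general a proper compact set of positive but not full measure, Fubini only gives that for $\mu^{u}$-a.e.\ $u$ the slice $\{s:(s,u)\in\Omega_{0}\}$ is conull in the corresponding slice of $\mathcal{D}(N_{*},\theta_{*})$; it is \emph{not} $\mu^{s}$-conull in $\pi_{s}([0;i])$ and need not accumulate at the prescribed coordinates $\pi_{s}(x_{0}),\pi_{s}(y_{0})$. So the sequences $x_{n},y_{n}$ are not produced. The statement of Lemma \ref{Lipschitzapprox} gives only one-point approximation ($\mathcal{D}(N,\theta)\subseteq\text{supp}(\mu|\mathcal{D}(N_{*},\theta_{*}))$); to get leaf-respecting pairs one must go back into its proof and use the product sets $E_{m}(x)=\pi_{s}^{-1}(K_{m}^{s}(x))\cap\pi_{u}^{-1}(K_{m}^{u}(x))\subseteq\mathcal{D}(N_{*},\theta_{*})$, whose unstable factor depends only on $\pi_{u}(x)$ and is therefore shared by $x_{0}$ and any $y_{0}\in W^{s}_{loc}(x_{0})$ --- or avoid the issue as the paper does: $\eta$ is uniformly continuous on $\Omega\cap\mathcal{D}(N_{*},\theta_{*})$, hence admits a \emph{unique} continuous extension to $\text{supp}(\mu|\mathcal{D}(N_{*},\theta_{*}))\supseteq\mathcal{D}(N,\theta)$, which is then shown to be holonomy invariant and identified with the defining formula for $\hat{\eta}$. (Two side remarks: the $H^{s}$-case needs no approximation at all, since $[\omega^{i},y]=[\omega^{i},x]$ for $y\in W^{s}_{loc}(x)$ and the composition law in Proposition \ref{existenceholonomies} give $H^{s}_{xy}[\hat{\eta}_{x}]=\hat{\eta}_{y}$ formally; and your claim that $[\omega^{i},x]\in\Omega$ for $\mu$-a.e.\ $x$ requires choosing $\omega^{i}$ from a further full-measure set of admissible base points, not an arbitrary point of $\Omega\cap\mathcal{D}(N,\theta)\cap[0;i]$.)

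The second gap is the cocycle equation, which does not ``follow analogously'': $\mathcal{D}(N,\theta)$ is not $f$-invariant, so for $x\in\mathcal{D}(N,\theta)$ the point $f(x)$, and the points $f(x_{n})$ along any approximating sequence, may leave the set on which $\hat{\eta}$ is defined, continuous, and identified with $\eta$; the limit passage then has no uniform control at the image points, and $\Omega_{0}\cap f^{-1}(\Omega_{0})$ has no reason to accumulate at $x_{0}$. This is precisely why the paper proves the auxiliary Proposition \ref{gap}, $f^{-1}(\mathcal{D}(N,\theta))\subseteq\mathcal{D}(N,\theta+\varepsilon)$ for $N$ large with $\theta+\varepsilon<\tau$, and then obtains $A$-invariance by noting that $x\mapsto A(f^{-1}(x))[\hat{\eta}_{f^{-1}(x)}]$ is another continuous extension of $\eta$ on $\mathcal{D}(N,\theta)$ and invoking uniqueness of the continuous extension. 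Some device of this kind is indispensable, and your proposal does not supply it.
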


\begin{proof}
Fix $N > 0$ and $\theta < \tau$ as in the construction of $\hat{\eta}$ above. Apply Lemma \ref{Lipschitzapprox} to $N$ and $\theta$ to obtain $\theta_{*} < \tau$  and $N_{*}$ such that  
\[
\mathcal{D}(N,\theta) \subseteq \text{supp}(\mu | \mathcal{D}(N_{*},\theta_{*})).
\]
Since $\eta$ is uniformly continuous on $\Omega \cap \mathcal{D}(N_{*},\theta_{*})$ and this subset has full measure in $\mathcal{D}(N_{*},\theta_{*})$, we conclude that $\eta$ has a unique continuous extension to the set $\text{supp}(\mu | \mathcal{D}(N_{*},\theta_{*}))$ and therefore a unique continuous extension to the set $\mathcal{D}(N,\theta)$. 

Since every point of $\text{supp}(\mu | \mathcal{D}(N_{*},\theta_{*}))$ is a limit point of the set $\Omega \cap \mathcal{D}(N_{*},\theta_{*})$ on which $\eta$ is invariant under the stable and unstable holonomies $H^{s}$ and $H^{u}$ by Lemma \ref{holonomyinvariance} and these holonomies are uniformly continuous on $\mathcal{D}(N_{*},\theta_{*})$ we conclude that this continuous extension of $\eta$ to $\text{supp}(\mu | \mathcal{D}(N_{*},\theta_{*}))$ is invariant under $H^{u}$ and $H^{s}$. Since $\omega^{i} \in \text{supp}(\mu | \mathcal{D}(N_{*},\theta_{*}))$ for $1 \leq i \leq \l$ this implies that this continuous extension coincides with $\hat{\eta}$. Thus $\hat{\eta}$ is uniformly continuous on $\mathcal{D}(N,\theta)$ and invariant under both $H^{s}$ and $H^{u}$ for any choice of $N > 0$, $\theta < \tau$. 

Lastly we need to show that $\hat{\eta}$ is $A$-invariant. To do this we use the following easy proposition, 

\begin{prop}\label{gap}
Let $\e > 0$ be given. Then for every $N$ large enough we have for any $\theta > 0$, 
\[
f^{-1}(\mathcal{D}(N,\theta)) \subseteq \mathcal{D}(N,\theta+\e).
\]
\end{prop}

\begin{proof}
For every $x \in \Sigma$ and $N > 0$ we have the inequality
\[
\|A^{N}(f^{-1}(x))\| \leq \|A^{N}(x)\| \cdot \|A(f^{N}(x))^{-1}\| \cdot \|A(f^{-1}(x))\|
\]
Let $R = \sup_{x \in \Sigma} \text{max}\{\|A(x)\|,\|A(x)^{-1}\|\}$. Then we have 
\[
\|A^{N}(f^{-1}(x))\| \leq R^{2}\|A^{N}(x)\| 
\]
and similarly 
\[
\|A^{-N}(f^{-1}(x))\| \leq R^{2}\|A^{-N}(x)\| 
\]
for every $x \in \Sigma$. We conclude that for every $x \in \mathcal{D}(N,\theta)$, 
\[
\prod_{j = 0}^{k-1} \|A^{N}(f^{jN}(f^{-1}(x)))\| \|A^{N}(f^{jN}(f^{-1}(x)))^{-1}\| \leq R^{4k}e^{kN\theta} \; \; \text{for all $k \geq 1$},
\]
and the same inequality holds with $f^{-1}$ and $A^{-1}$ replacing $f$ and $A$. Hence we see that if $R^{4}\leq e^{N\e}$ then $f^{-1}(x) \in  \mathcal{D}(N,\theta+\e)$, and this clearly holds for $N$ large enough. 
\end{proof}  

By Proposition \ref{gap} we can (by taking $N_{*}$ larger if necessary) find some $0 < \e < \tau - \theta_{*}$ such that 
\[
f^{-1}(\mathcal{D}(N_{*},\theta_{*})) \subset  \mathcal{D}(N_{*},\theta_{*}+\e)
\]
Applying the conclusions of the previous paragraph to $\mathcal{D}(N_{*},\theta_{*}+\e)$ since $\theta_{*}+\e < \tau$, we conclude that $\hat{\eta}$ is uniformly continuous on $\mathcal{D}(N_{*},\theta_{*}+\e)$ and thus $x \rightarrow \hat{\eta}_{f^{-1}(x)}$ is a uniformly continuous function on $\mathcal{D}(N_{*},\theta_{*})$. Setting $\zeta_{x} = A(f^{-1}(x))[\hat{\eta}_{f^{-1}(x)}]$, we conclude that  $\zeta$ is uniformly continuous on $\mathcal{D}(N_{*},\theta_{*})$ and $\zeta_{x} = \eta_{x}$ for $\mu$-a.e. $x \in \Sigma$. Hence $\zeta$ also gives a continuous extension of $\eta$ to $\mathcal{D}(N,\theta)$. By the uniqueness of this extension we conclude that $\zeta = \hat{\eta}$ on $\mathcal{D}(N,\theta)$ for any $N > 0$, $\theta <\tau$. This then implies that for $x \in \mathcal{D}(N,\theta)$, 
\[
\hat{\eta}_{x} = A(f^{-1}(x))[\hat{\eta}_{f^{-1}(x)}],
\]
which is equivalent to the desired invariance property of $\hat{\eta}$. 
\end{proof}

\subsection{Vanishing of periodic exponents}\label{subsec: vanishing} We recall that for a periodic point $p$ we denote the $f$-invariant probability measure supported on the orbit of $p$ by $\nu_{p}$ and we denote the extremal Lyapunov exponents of $\nu_{p}$ by $\la_{+}(p)$ and $\la_{-}(p)$. The following lemma marks a major step in the proof of Proposition \ref{periodiczero} which comes as a consequence of our work in Section \ref{subsec: extension}. We are able to rule out the existence of periodic points for which the gap between the two extremal Lyapunov exponents of $\mathcal{A}$ falls below a certain threshold. We are also able to find a periodic point $q$ at which $\la_{+}(q) = 0$; in fact the techniques of the proposition make it clear how to produce many such points, but we will only need one for our purposes.

\begin{lem}\label{smallperiodiczero} Suppose that $\mathcal{A}$ preserves a measurable conformal structure. Then, 
\begin{enumerate}
\item There are no periodic points $p$ for which the inequality,
\[
0 < \la_{+}(p)-\la_{-}(p) < \tau,
\]
 holds. 
 \item There is a periodic point $q$ such that $\la_{+}(q) = 0$. 
 \end{enumerate}
\end{lem}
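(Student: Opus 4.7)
The plan is to exploit the $\mathcal{A}$-invariant extension $\hat{\eta}$ furnished by Lemma~\ref{extension} on the sets $\mathcal{D}(N,\theta)$ with $\theta < \tau$, and to combine this with Proposition~\ref{zeroexponents} and the abundance of periodic orbits.

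For part (1), I would start with a periodic point $p$ of period $k$ satisfying $0 < \la_+(p) - \la_-(p) < \tau$ and choose $\theta \in (\la_+(p) - \la_-(p),\tau)$. Because $f^{jN}(p) = p$ whenever $k \mid N$, the defining products for $\mathcal{D}(N,\theta)$ collapse and membership reduces to $\|A^N(p)\| \cdot \|A^N(p)^{-1}\| \leq e^{N\theta}$ together with its backward analogue. Writing $A^N(p) = A^k(p)^m$ with $N = mk$, the spectral radius formula gives $\tfrac{1}{N}\log\|A^N(p)\| \to \la_+(p)$ and $\tfrac{1}{N}\log\|A^N(p)^{-1}\| \to -\la_-(p)$, so the inequality holds for $N$ sufficiently large and $p \in \mathcal{D}(N,\theta)$. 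Then Lemma~\ref{extension} gives $A^k(p)[\hat{\eta}_p] = \hat{\eta}_p$, so $A^k(p) \in SL(d,\R)$ preserves the inner product $\hat{\eta}_p$ and is consequently conjugate to an element of $SO(d,\R)$. All of its eigenvalues must lie on the unit circle, forcing $\la_+(p) = \la_-(p) = 0$ and contradicting the hypothesis $\la_+(p) > \la_-(p)$.

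For part (2), I would use the density of periodic orbits to reduce the question to part (1). By Proposition~\ref{zeroexponents} we have $\la_+(\mathcal{A},\mu) = \la_-(\mathcal{A},\mu) = 0$, so for a $\mu$-generic $x$ we have $\tfrac{1}{n}\log\|A^n(x)\| \to 0$ and $\tfrac{1}{n}\log\|A^{-n}(x)\| \to 0$. Topological mixing of $f$ lets me produce, for each large $k$, a periodic point $q_k$ of period $k$ whose symbolic sequence agrees with $x$ on a long central block; Lipschitz continuity of $A$ then forces $\|A^k(q_k)\|^{\pm 1}$ to be close to $\|A^k(x)\|^{\pm 1}$, so for $k$ sufficiently large
\[
\tfrac{1}{k}\log\|A^k(q_k)\| + \tfrac{1}{k}\log\|A^{-k}(q_k)\| < \tau.
\]
Combined with the elementary bounds $\la_+(q_k) \leq \tfrac{1}{k}\log\|A^k(q_k)\|$ and $-\la_-(q_k) \leq \tfrac{1}{k}\log\|A^{-k}(q_k)\|$ this yields $\la_+(q_k) - \la_-(q_k) < \tau$. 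By part (1) this gap equals zero, forcing all Lyapunov exponents of $A^k(q_k) \in SL(d,\R)$ to coincide and hence to vanish; so $q := q_k$ works.

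The main obstacle is that $\mathcal{D}(N,\theta)$ is genuinely an infinite-time condition, which for non-periodic points was handled by Lemma~\ref{Lipschitzapprox}. For periodic orbits the obstacle disappears because the entire orbit is determined by a single period and the relevant growth rates are controlled by the spectral radius. The passage from ``approximately zero'' extremal exponents for $q_k$ (supplied by shadowing) to ``exactly zero'' exponents is precisely what part (1) provides, so the two parts must be proved together in the order above.
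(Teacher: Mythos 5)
Your part (1) is essentially the paper's own argument: for a periodic point $p$ of period $k$ with $\la_{+}(p)-\la_{-}(p)<\theta<\tau$, periodicity collapses the defining products for $\mathcal{D}(N,\theta)$ once $k\mid N$, Gelfand's formula places $p\in\mathcal{D}(N,\theta)$ for $N$ large, and Lemma \ref{extension} then shows $A^{k}(p)$ preserves the inner product $\hat{\eta}_{p}$, forcing $\la_{+}(p)=\la_{-}(p)=0$. No issues there.

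Part (2) is where there is a genuine gap. You construct a periodic point $q_{k}$ shadowing a long orbit segment of a $\mu$-generic point $x$ and assert that ``Lipschitz continuity of $A$ then forces $\|A^{k}(q_{k})\|^{\pm 1}$ to be close to $\|A^{k}(x)\|^{\pm 1}$.'' That step does not follow from Lipschitz continuity of the generator plus exponential closeness of the two orbit segments. Closeness of the generators controls each factor, not the norm of the product: in any telescoping or perturbative estimate the error terms involve the intermediate block norms $\|A^{j-i}(f^{i}(x))\|$ and $\|A^{j}(q_{k})\|$ for all $0\le i\le j\le k$, and subexponential growth along the single generic orbit of $x$ does not give the bound $e^{\e(j-i)}$ uniformly over all such blocks; without fiber bunching, $\|A^{j}(q_{k})\|$ may a priori grow like $e^{\zeta j}$. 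Obtaining this uniform block control is precisely what Kalinin's Lyapunov norm is for, and for a generic point of $\mu$ the comparison between the Lyapunov norm and the Euclidean norm is only subexponentially tempered, not uniform, so the estimate you need is the content of Kalinin's approximation theorem (via his Lemma 3.1), not an elementary distortion bound. Note that even the paper's Lemma \ref{periodicapprox}, which handles the easier case where the shadowed orbit is \emph{periodic} (so the Lyapunov/Euclidean comparison is uniform along the orbit), is deduced from Kalinin's Lemma 3.1 rather than from a direct Lipschitz argument. The paper sidesteps your construction entirely: since $\la_{+}(\mathcal{A},\mu)=\la_{-}(\mathcal{A},\mu)=0$ by Proposition \ref{zeroexponents}, Kalinin's exponent approximation theorem yields a periodic point $q$ with $\la_{+}(q)-\la_{-}(q)<\tau$, and then part (1) together with $\det A\equiv 1$ gives $\la_{+}(q)=0$. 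If you replace your shadowing claim by a citation of that theorem (or reproduce its Lyapunov-norm proof), your argument for (2) goes through; as written, the crucial estimate is asserted rather than proved.
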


\begin{proof}
Suppose we have a periodic point $p$ of $f$ which satisfies the inequality  $\la_{+}(p)-\la_{-}(p) < \tau$. Choose $\theta$ satisfying the inequality 
\[
\la_{+}(p)-\la_{-}(p) < \theta < \tau.
\]
We will show that there is an $N > 0$ such that $p \in \mathcal{D}(N,\theta)$. 

Let $k$ be the period of $p$. As a direct consequence of the defining formulas for the extremal Lyapunov exponents of $p$ we have 
\[
\la_{+}(p) - \la_{-}(p)  =  \lim_{n \rightarrow \infty} \frac{\log \|A^{kn}(p)\| \|(A^{kn}(p))^{-1}\|}{kn}.
\]
Chooose $M$ large enough that for $n \geq M$,
\[
\|A^{kn}(p)\| \|(A^{kn}(p))^{-1}\| \leq e^{kn\theta}.
\] 
Let $N$ = $k M$. Then, using the periodicity of $p$, for each $s \geq 1$ we have, 
\begin{align*}
\prod_{j = 0}^{s-1} \|A^{N}(f^{jN}(p))\| \|(A^{N}(f^{jN}(p)))^{-1}\| &= \prod_{j = 0}^{s-1} \|A^{N}(f^{jkM}(p))\| \|(A^{N}(f^{jkM}(p)))^{-1}\| \\
&= (\|A^{N}(p)\| \|(A^{N}(p))^{-1}\|)^{s} \\
&\leq e^{sN\theta}
\end{align*}
Since $A^{-kM}(p) = (A^{kM}(p))^{-1}$ and $(A^{-kM}(p))^{-1} = A^{kM}(p)$ we also have 
\[
\prod_{j = 0}^{s-1} \|A^{-N}(f^{-jN}(p))\| \|(A^{-N}(f^{-jN}(p)))^{-1}\| \leq e^{sN\theta}
\]
Hence $p \in \mathcal{D}(N,\theta)$. 

Since $\theta < \tau$, we conclude by Lemma \ref{extension} that the extension $\hat{\eta}$ of $\eta$ is defined at $p$ and satisfies $A^{k}(p)[\hat{\eta}_{p}] = \hat{\eta}_{p}$. Thus $A^{k}(p)$ preserves an inner product on $\R^{d}$; we conclude that $\la_{+}(p) = 0$. This proves the first part of the lemma. 

By Kalinin's exponent approximation theorem \cite{Kal} there is at least one periodic point $q$ such that $\la_{+}(q)-  \la_{-}(q)< \tau$. This is because we have an ergodic $f$-invariant probability measure $\mu$ for which $\la_{+}(\mathcal{A},\mu) =  0$ and $\la_{-}(\mathcal{A},\mu) = 0$. By the first part of the lemma we must then have $\la_{+}(q) - \la_{-}(q) = 0$. This implies that $\la_{+}(q) = 0$ since $-\la_{-}(q) > 0$ (because $\la_{+}(x) > 0$ and $\mathcal{A}$ is an $SL(d,\R)$-valued cocycle). 
\end{proof}

A prospective strategy for finishing the proof is to show that if there is some periodic point $x \in \Sigma$ with $\la_{+}(x) - \la_{-}(x) \geq \tau$ then, letting $y$ be a periodic point such that $\la_{+}(y) = 0$, we can produce a new periodic point $p$ shadowing some mixture of the orbits of $x$ and $y$ such that $0 < \la_{+}(p) - \la_{-}(p) < \tau$. This would contradict Lemma \ref{smallperiodiczero}.  

However the quantity $\la_{+}(p) - \la_{-}(p)$ is difficult to estimate from below; this is tied to the inherent difficulty of proving that cocycles have nonzero exponents. We will take an alternative approach which does not require us to directly estimate the Lyapunov exponents of the periodic point $p$ that we construct. 

\begin{proof}[Proof of Proposition \ref{periodiczero}]
We assume to a contradiction that there is a periodic point $x$ of $f$ with $\la_{+}(x) > 0$.  Let $y$ be a periodic point given by Lemma \ref{smallperiodiczero} such that $\la_{+}(y) = 0$.   Choose $k$ such that both $x$ and $y$ are fixed by $f^{k}$. For each positive integer $m$ which is divisible by $k$, we will construct a new periodic point $p^{m}$ symbolically using orbit segments of $f$ which shadow the orbits of $x$ and $y$ and which satisfies $f^{u_{m}}(p^{m}) = p^{m}$ for a certain positive integer $u_{m}$. 

We will show that we can construct this sequence of periodic points $\{p^{m}\}$ such that the following two statements must hold simultaneously: First, we claim that as $m \rightarrow \infty$ we have $\|A^{u_{m}}(p^{m})\| \rightarrow \infty$. Second, we claim that there is a positive integer $N$ and a $\theta < \tau $ such that $p^{m} \in \mathcal{D}(N,\theta)$ for all $m$ large enough. 

This produces a contradiction: by Lemma \ref{extension} $\hat{\eta}$ is defined and uniformly continuous on the compact set $\mathcal{D}(N,\theta)$.  There is thus a constant $\gamma > 0$ such that for any vector $v \in \R^{d}$ and any  $q \in \mathcal{D}(N,\theta)$ we have 
\[
\gamma^{-1}\|v\|^{2} \leq \hat{\eta}_{q}(v,v) \leq \gamma \|v\|^{2}
\]
Since $A^{u_{m}}(p^{m})[\hat{\eta}_{p^{m}}] = \hat{\eta}_{p^{m}}$, we thus have 
\begin{align*}
\|A^{u_{m}}(p^{m})(v)\|^{2} &\leq \gamma \cdot \hat{\eta}_{q}(A^{u_{m}}(p^{m})(v),A^{u_{m}}(p^{m})(v)) \\
&= \gamma \cdot \hat{\eta}_{q}(v,v) \\
&\leq \gamma^{2} \|v\|^{2}
\end{align*}
Thus $\|A^{u_{m}}(p^{m})(v)\| \leq \gamma$ for all large enough $m$, which contradicts the previous assertion that $\|A^{u_{m}}(p^{m})\| \rightarrow \infty$ as $m \rightarrow \infty$.

We write $x = (x_{n})_{n \in \Z}$, $y = (y_{n})_{n \in \Z}$ in the shift coordinates on $\Sigma$. Recall that $Q$ denotes the matrix defining valid words for the subshift of finite type $\Sigma$. Since $\Sigma$ is a topologically mixing subshift of finite type there is an $M > 0$ such that for all $n \geq M$ the matrix  $Q^{n}$ has all positive entries, meaning that any two letters in the alphabet $\{1, \dots, \l\}$ can be connected by a valid word of length $n$.  By increasing $k$ if necessary we may assume that $k \geq M$ and consequently $m \geq M$ since $m$ is a multiple of $k$. Let $b$ and $c$ be two positive integers whose values will be tuned later. 

Set $u_{m} = (2b + c + 1)m$. Recall that we restricted $m$ to be a multiple of $k$ so that $f^{m}(x) = x$ and $f^{m}(y) = y$. We define $p^{m} = (p^{m}_{n})_{n \in \Z}$ where the numbers $p^{m}_{n} \in \{1, \dots,\l\}$ are chosen as follows: for $-bm \leq j \leq bm$ we set $p^{m}_{j} = y_{j}$. For $(b+1)m \leq j \leq (b+ c + 1)m$ we define $p^{m}_{j} = x_{j-(b+1)m}$. 

For $(b+1)m  < j < (b+ c + 2)m $ we let the string $(p_{(b+1)m}^{m},\dots,p_{(b+ c +1)m}^{m})$ be any valid word in the alphabet of $\Sigma$ which is of length $m$ and connects $y_{m}$ to $x_{0}$. This is possible because we chose $m$  such that $m \geq M$. Similarly for $(b+ c +1)m \leq  j \leq (b+ c +2)m$ we can fill in $m$ coordinates for $p^{m}_{j}$ which connect $x_{ (b+ c +1)m}$ to $y_{0}$ in the subshift $\Sigma$. We then define the rest of the coordinates of $p^{m}$ by declaring $p^{m}$ to be periodic with period $u_{m}$. 

Let $\e > 0$ be a small parameter which will be tuned later along with $b$ and $c$. For ease of notation in what follows we set $\la:= \la_{+}(x)$ and $\zeta = \sup_{z \in \Sigma}  \log  \|A(z)\| \|A(z)^{-1}\|$. We set $\chi := c(\la - \e) - 2b\e - 2\zeta$ and fix some $\theta$ with $0 < \theta < \tau$ (for example $\theta = \tau/2$). 

We first show that if $\chi > 0$ then $\|A^{u_{m}}(p^{m})\| \rightarrow \infty$ as $m \rightarrow \infty$. We will address later the question of how to choose $b$, $c$ and $\e$ together to guarantee that $\chi > 0$ while also satisfying some other necessary conditions. We will need the following lemma, 

\begin{lem}\label{periodicapprox}
Let $p \in \Sigma$ be a periodic point for $f$ with extremal Lyapunov exponent $\la :=\la_{+}(p)$ and let $\e > 0$ be given. Then there is a constant $C \geq 1$ independent of $n$ such that for every $n \geq 1$, if $q \in \Sigma$ satisfies 
\[
\rho(f^{j}(p),f^{j}(q)) \leq \text{max}\{e^{-j\tau},e^{-(n-j)\tau}\}, \; 0 \leq j \leq n,
\]
then 
\[
C^{-1}e^{n(\la-\e)}\leq \|A^{n}(q)\| \leq  Ce^{n(\la+\e)}.
\]
\end{lem}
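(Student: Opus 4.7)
The plan is a strong induction on $n$, comparing $A^n(q)$ to $A^n(p)$ via the telescoping expansion
\[
A^n(q) - A^n(p) = \sum_{j=0}^{n-1} A^{n-j-1}(f^{j+1}(q))\bigl[A(f^j(q)) - A(f^j(p))\bigr]A^j(p).
\]
First I would rewrite the shadowing hypothesis as $\rho(f^j(p),f^j(q)) \leq e^{-\tau\min(j,n-j)}$, which forces $q_m = p_m$ for $m \in [1,n-1]$. Lipschitz continuity of $A$ then gives $\|A(f^j(q)) - A(f^j(p))\| \leq L\,e^{-\tau\min(j,n-j)}$. With $k$ the period of $p$ and $M := A^k(p)$, the spectral radius formula applied to $M$ produces a constant $C_p$ (absorbing Jordan-block polynomial corrections and short-time effects) such that $C_p^{-1}e^{n(\la-\e/3)} \leq \|A^n(p)\| \leq C_p\,e^{n(\la+\e/3)}$ for every $n \geq 1$.

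For the upper bound, I would verify that $f^{j+1}(q)$ satisfies the shadowing hypothesis for $f^{j+1}(p)$ over $n-j-1$ iterations with bounds at least as strong as those in the lemma (immediate from monotonicity of $\max$), so the strong inductive hypothesis yields $\|A^{n-j-1}(f^{j+1}(q))\| \leq C\, e^{(n-j-1)(\la+\e)}$. Substituting this together with $\|A^j(p)\| \leq C_p\,e^{j(\la+\e/3)}$, the telescoping error is bounded by
\[
LCC_p\, e^{(n-1)(\la+\e)}\sum_{j=0}^{n-1} e^{-2j\e/3 - \tau \min(j, n-j)},
\]
whose sum is uniformly bounded by a constant $S$ independent of $n$. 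This produces the recursive inequality $C \geq C_p + LC_pCS\,e^{-(\la+\e)}$; closing the induction requires $LC_pS\,e^{-(\la+\e)} < 1$, achievable by an appropriate choice of intermediate slack parameters, with small-$n$ base cases covered by the trivial bound $\|A^n(q)\| \leq K^n$ where $K := \sup_x \|A(x)\|$.

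For the lower bound, the naive inequality $\|A^n(q)\| \geq \|A^n(p)\| - \|A^n(q) - A^n(p)\|$ fails because the telescoping error can dominate $\|A^n(p)\|$. Instead I would select a unit vector $v$ in the top Oseledets subspace $V \subset \R^d$ of $M$, for which $\|A^n(p)v\| \geq c\,e^{n(\la-\e/3)}$, and invoke perturbation theory for invariant subspaces: the shadowing keeps the top invariant direction of the product defining $A^n(q)$ close to $V$, yielding a nearby unit vector $v'$ with $\|A^n(q)v'\| \geq c'\,e^{n(\la-\e)}$, and hence the required lower bound on $\|A^n(q)\|$.

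The hard part will be the lower bound: the telescoping error estimate is too crude to be subtracted directly from $\|A^n(p)\|$, so the argument must exploit the finer structure of $M$ through its Oseledets subspace (using the spectral gap between $\la$ and the remaining exponents when $\la > 0$, or the bounded-norm property $\|M^m\| = O(m^{d-1})$ when $\la = 0$ forces all exponents of $p$ to vanish in the $SL(d,\R)$ setting). Closing the upper-bound induction with a uniform constant is also delicate and requires parameter choices that keep the recursive coefficient $LC_pSe^{-(\la+\e)}$ strictly below one.
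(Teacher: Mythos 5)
Your upper-bound induction does not close, and the smallness you need cannot be arranged. After the telescoping estimate your renewal inequality has the form $C \geq C_p + L\,C_p\,S\,e^{-(\la+\e)}\,C$, so you need $L\,C_p\,S\,e^{-(\la+\e)} < 1$; but $L$ (the Lipschitz constant of $A$), $C_p$, $S\geq 1$, $\la$ and $\e$ are all fixed by the data, and the ``intermediate slack parameters'' (how you split $\e$ into $\e/3$ versus $\e$) only affect $S$ mildly, never below $1$. Concretely, the $j=0$ term of your telescoping sum already contributes $L\,e^{-(\la+\e)}\cdot Ce^{(n-1)(\la+\e)}$, because at $j=0$ (and $j=n$) the shadowing hypothesis gives $\max\{e^{0},e^{-n\tau}\}=1$, i.e.\ no closeness at all; so whenever $L\geq e^{\la+\e}$ the inequality $u_n \leq C_p + L e^{-(\la+\e)}u_{n-1}+\cdots$ for $u_n:=\|A^n(q)\|e^{-n(\la+\e)}$ is compatible with exponential growth of $u_n$, and no uniform constant $C$ exists within your scheme. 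The structural problem is that an additive telescoping forces you to beat the one-step error by a margin you do not have; the standard repair is multiplicative: compare one step at a time in an $\e$-Lyapunov norm adapted to the periodic orbit, so each step costs a factor $e^{\la+\e/2}\bigl(1+\mathrm{const}\cdot e^{-\tau\min(j,n-j)}\bigr)$, and the product of the correction factors is uniformly bounded in $n$ with no smallness condition. That is precisely the content of Kalinin's Lemma 3.1, which is what the paper invokes: it quotes that lemma (whose hypothesis carries an extra $\delta$ in the shadowing distance), notes that for a measure supported on a periodic orbit the $\e$-Lyapunov norm is uniformly comparable to the Euclidean norm, and removes the $\delta$ by discarding the first and last $m$ iterates with $e^{-m\tau}<\delta$, absorbing them into $C$.

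The lower bound in your proposal is also only an outline. ``Invoke perturbation theory for invariant subspaces'' is not yet an argument: you would need an invariant-cone (graph-transform) construction around the top generalized eigenspace of $M=A^k(p)$, using the gap between $\la$ and the next exponent of $M$, and you must handle the two ends of the orbit segment, where, as noted above, the matrices $A(f^j(q))$ need not be close to $A(f^j(p))$ at all, so the cone can be destroyed there and only a uniformly bounded number of such iterates can be absorbed into $C$. (As a side remark, in the $SL(d,\R)$ setting in force in Section \ref{sec: proof} the lower bound is trivial when $\la=0$, since $\det A^n(q)=1$ forces $\|A^n(q)\|\geq 1\geq e^{n(\la-\e)}/C$; but for $\la>0$ the cone argument must actually be carried out.) Both halves of your proof would be repaired simultaneously by working in the Lyapunov norm along the periodic orbit and multiplying one-step estimates, i.e.\ by reproving Kalinin's lemma rather than by telescoping against $A^n(p)$ in the Euclidean norm.
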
 

\begin{proof}
This lemma follows from \cite[Lemma 3.1]{Kal} together with the observation that for a probability measure $\nu_{p}$ supported on a periodic point $p$, there is a uniform comparison between the $\e$-Lyapunov norm and the Euclidean norm which depends only on $\e$. Lemma 3.1 of \cite{Kal} then gives that there is a $\delta > 0$ and a constant $C' \geq 1$ such that for every $n \geq 1$, if 
\[
\rho(f^{j}(p),f^{j}(q)) \leq \delta\cdot \text{max}\{e^{-j\tau},e^{-(n-j)\tau}\}, \; 0 \leq j \leq n,
\]
then
\[
C'^{-1}e^{n(\la-\e)}\leq \|A^{n}(q)\| \leq  C'e^{n(\la+\e)}.
\]
To obtain our desired statement, fix $m$ large enough that $e^{-m\tau} < \delta$. Then for every $n \geq 1$, if 
\[
\rho(f^{j}(p),f^{j}(q)) \leq\text{max}\{e^{-j\tau},e^{-(n-j)\tau}\}, \; 0 \leq j \leq n,
\]
then 
\[
\rho(f^{j}(p),f^{j}(q)) \leq \delta \cdot \text{max}\{e^{-j+m\tau},e^{-(n-j+m)\tau}\} , \; m\leq j \leq n-m,
\]
and consequently 
\[
C'^{-1}e^{(n-2m)(\la-\e)}\leq \|A^{n-m}(f^{m}(q))\| \leq  C'e^{(n-2m)(\la+\e)}.
\]
We conclude that 
\[
C^{-1}e^{n(\la-\e)}\leq \|A^{n}(q)\| \leq  Ce^{n(\la+\e)},
\]
with $C = C' e^{2m(\la+\e)}\sup_{z \in \Sigma}\|A^{m}(z)\|\|A^{m}(z)^{-1}\|$.
\end{proof}

For $0 \leq j \leq bm$ we have
\[
\rho(f^{j}(y),f^{j}(p^{m})) \leq \text{max}\{e^{-j \tau}, e^{-(bm-j)\tau}\}.
\]
For $0 \leq j \leq cm$, 
\[
\rho(f^{j + (b+1)m}(x),f^{j + (b+1)m}(p^{m})) \leq \text{max}\{e^{-j \tau}, e^{-(cm-j)\tau}\}. 
\]
Finally for $0 \leq j \leq bm$
\[
\rho(f^{j+(b+c+2)m}(y),f^{j+(b+c+2)m}(p^{m})) \leq \text{max}\{e^{-j \tau}, e^{-(bm-j)\tau}\}.
\]

Apply Lemma \ref{periodicapprox} to the periodic points $x$ and $y$ with the $\e > 0$ chosen above and let $C$ be the maximum of the two constants in the output of Lemma \ref{periodicapprox} for $x$ and $y$ respectively. 

We have the lower bound using Lemma \ref{periodicapprox},
\begin{align*}
\|A^{u_{m}}(p^{m})\| &\geq \|A^{cm}(f^{(b+1)m}(p^{m}))\| \cdot \|A^{bm}(p^{m})^{-1}\|^{-1} \cdot \|A^{km}(f^{bm}(p^{m}))^{-1}\|^{-1} \\
&\cdot \|A^{m}(f^{(b+c+1)m}(p^{m}))^{-1}\|^{-1} \cdot \|A^{bm}(f^{(b+c+1)m}(p^{m}))^{-1}\|^{-1} \\
&\geq C^{-3}\exp((c(\la - \e) - 2b\e - 2\zeta)m) \\
&= C^{-3}\exp(\chi m) 
\end{align*}
If $\chi > 0$ then it follows that $\|A^{u_{m}}(p^{m})\| \rightarrow \infty$ as $m \rightarrow \infty$. 

It remains to prove the claim that there is an $N > 0$ such that for every $m \geq 1$ large enough we have $p^{m} \in \mathcal{D}(N,\theta)$. We will first show that there is an $N >0$ such that for $m \geq 1$ large enough and every $ s \geq 1$, 
\[
\prod_{j=0}^{s-1} \|A^{N}(f^{jN}(p^{m}))\| \|A^{N}(f^{jN}(p^{m}))^{-1}\| \leq e^{s N \theta}.
\]
We will prove this first set of inequalities using the fact that upon applying $f$ the periodic point $p^{m}$ first shadows the orbit of $y$ for $bm$ iterates, then the orbit of $x$ for $cm$ iterates, then again shadows the orbit of $y$ for another $bm$ iterates (with $2m$ transitioning iterates where $f$ does not necessarily shadow $x$ or $y$).

The points $p^{m}$ are constructed such that if we apply $f^{-1}$ we get the same pattern of shadowing $y$ for $bm$ iterates, $x$ for $cm$ iterates, then $y$ again for $bm$ iterates. Hence it can be checked that the same proof given for the first set of inequalities will also prove the second set of inequalities, 
\[
\prod_{j=0}^{s-1} \|A^{-N}(f^{-jN}(p^{m}))\| \|A^{-N}(f^{-jN}(p^{m}))^{-1}\| \leq e^{s N \theta}. 
\]
and thus establish that $p^{m} \in \mathcal{D}(N,\theta)$ for all $m$ large enough. 

We proceed with the proof of the first set of inequalities. The main idea of the proof is simple: the ratio $\frac{b}{c}$ controls the ratio of the number of iterates that $p^{m}$ shadows $y$ to the number of iterates that $p^{m}$ shadows $x$. We essentially showed above that if $c$ is large enough then the influence of the positive exponent of $x$ forces $\|A^{u_{m}}(p^{m})\| \rightarrow \infty$ as $m \rightarrow \infty$. We will show that once $b$ is chosen such that the ratio $\frac{b}{c}$ is large enough then the time that $p^{m}$ spends shadowing the point $y$ is sufficient to cancel enough of the influence of the Lyapunov exponents of $x$ to guarantee that $p^{m}$ lands in $\mathcal{D}(N,\theta)$ for some choice of $N$ that is independent of $m$. 

%Unfortunately the computations required to first choose the correct $b$, $c$, and $\e$ and then to verify this seemingly simple argument are both involved and opaque. 

We now specify the parameters $b$, $c$, and $\e$. Recall that $\theta$ denotes a fixed number satisfying $0 < \theta < \tau$ and that we defined $\la = \la_{+}(x)$ and $\zeta = \sup_{z \in \Sigma} \log \|A(z)\| \|A(z)^{-1}\|$. We also set $\xi = \la_{+}(x) - \la_{-}(x)$. We choose the integer $c$ large enough that $c \la - 2\zeta > 0$. Having chosen $c$, we now choose $b$ large enough that the following three inequalities hold: first we require that 
\begin{equation}\label{ineq1}
\zeta\left(1-\frac{b}{b+1}\right) + \frac{\theta}{10} < \frac{9}{10}\theta.
\end{equation}
Second we require that 
\begin{equation}\label{ineq2}
\frac{b}{b+1}\cdot \frac{\theta}{10} + \frac{\zeta}{b+1} +\left(1-\frac{b+1}{b+c+1}\right)\left(\xi+\frac{\theta}{10}\right) < \frac{9}{10}\theta.
\end{equation}
Finally we require that 
\begin{equation}\label{ineq3}
\frac{b}{b+ c + 1}\cdot \frac{\theta}{10} + \frac{\zeta}{b+ c + 1} +\frac{c}{b+c+1}\left(\xi+\frac{\theta}{10}\right) +  \zeta \left(1-\frac{b+c+1}{b+c+2}\right)  < \frac{9}{10}\theta.
\end{equation}
It's easily verified that, for a fixed choice of $c$, each of these inequalities holds once $b$ is large enough. Finally, having chosen $b$ and $c$, we now choose $\e$ small enough that $\e \leq \theta/10$ and $\chi = c(\la - \e) - 2b\e  - 2\zeta > 0$. We note that since $\frac{9}{10}\theta \leq \theta - \e$, the three inequalities above also hold with $\theta - \e$ on the right hand side instead. The source of these three inequalities will become clear from the computations below. 

Let $J > 0$ be large enough that for $j \geq J$, 
\[
\|A^{j}(y)\| \|A^{j}(y)^{-1}\| \leq e^{j\e}, \; \|A^{j}(x)\| \|A^{j}(x)^{-1}\| \leq e^{j(\xi + \e)},
\]
Choose $r$ to be a multiple of $k$ which satisfies $r \geq J$. Let $L := \text{Lip}(\log \|A^{r}\| \|(A^{r})^{-1}\|)$ be the Lipschitz constant of $\log \|A^{r}\| \|(A^{r})^{-1}\|$. 
We define a constant
\[
C:= 2\sum_{j=0}^{\infty}e^{-j\tau} = \frac{2}{1-e^{-\tau}}.
\] 
%We then set $N = rt$ for an integer $t$ chosen large enough that $N\e > L\zeta$. 

Since we only need a subsequence of periodic points $p^{m_{q}}$ such that there is some $N$ for which $p^{m_{q}} \in \mathcal{D}(N,\theta)$, we can further restrict $m$ to be divisible by $r$ so that $m = qr$ for some integer $q$. For $0 \leq s \leq bq$ we then have
\begin{align*}
\log\left(\prod_{j = 0}^{s-1}\frac{\|A^{r}(f^{jr}(p^{m}))\| \|A^{r}(f^{jr}(p^{m}))^{-1}\|}{\|A^{r}(f^{jr}(y))\| \|A^{r}(f^{jr}(y))^{-1}\|}\right) &\leq L\sum_{j=0}^{s-1}\rho(f^{jr}(p^{m}),f^{jr}(y)) \\
&\leq L \sum_{j=0}^{s-1}\text{max}\{e^{-jN \tau}, e^{-(km-jN)\tau}\} \\
&\leq L C,
\end{align*}
for the constant $C$ which is independent of $m$ and $s$. For $(b+1)q \leq s \leq (b+c+1)q$, we replace $y$ by $x$ in the above estimate 
\begin{align*}
\log\left(\prod_{j = (b+1)q}^{s-1}\right.&\left.\frac{\|A^{r}(f^{jr}(p^{m}))\| \|A^{r}(f^{jr}(p^{m}))^{-1}\|}{\|A^{r}(f^{jr}(x))\| \|A^{r}(f^{jr}(x))^{-1}\|}\right) \\
&\leq \sum_{j= (b+1)q}^{s-1}L \rho(f^{jr}(p^{m}),f^{jr}(x)) \\
&=  \sum_{j= 0}^{s-1-(b+1)q}L \rho(f^{jr +(b+1)q }(p^{m}),f^{jr + (b+1)q}(x))\\
&\leq LC.
\end{align*}
Similar estimates give for $(b+c+2)q \leq s \leq (2b+c+2)q$, 
\[
\log\left(\prod_{j = (b+c+2)q}^{s-1}\frac{\|A^{r}(f^{jr}(p^{m}))\| \|A^{r}(f^{jr}(p^{m}))^{-1}\|}{\|A^{r}(f^{jr}(y))\| \|A^{r}(f^{jr}(y))^{-1}\|}\right) \leq LC
\]
We conclude, using the shadowing of $p^{m}$ along the orbit of $y$ for the first $bm$ iterates, that for $0 \leq s \leq bq$,
\begin{align*}
\prod_{j=0}^{s-1}\|A^{r}(f^{jr}(p^{m}))\| \|A^{r}(f^{jr}(p_{m}))^{-1}\| &\leq e^{LC}\left(\prod_{j= 0}^{s-1}\|A^{r}(f^{jr}(y))\| \|A^{r}(f^{jr}(y))^{-1}\|\right) \\
&\leq \exp(LC + rs\e) \\ 
&\leq \exp(LC + rs(\theta- \e)),
\end{align*}
since $\theta > 2\e$. For $bq \leq s \leq (b+1)q$ we combine the above estimate with the crude bound 
\[
\prod_{j=bq}^{s-1}\|A^{r}(f^{jr}(p^{m}))\| \|A^{r}(f^{jr}(p^{m}))^{-1}\| \leq \exp(\zeta r(s-bq)),
\]
(where we recall that $\zeta = \sup_{z \in \Sigma} \|A(z)\| \cdot \|A(z)^{-1}\|$) to get 
\begin{align*}
\prod_{j=0}^{s-1}\|A^{r}(f^{jr}(p^{m}))\| \|A^{r}(f^{jr}(p^{m}))^{-1}\| &\leq \exp(LC + brq\e + \zeta r(s-bq)) \\
&= \exp\left(LC + rs\left(\zeta\left(1-\frac{bq}{s}\right) + \frac{bq}{s}\e\right)\right) \\
&\leq  \exp\left(LC + rs\left(\zeta\left(1-\frac{b}{b+1}\right) + \e\right)\right) \\
&\leq \exp\left(LC + rs(\theta - \e)\right),
\end{align*}
where we've used here the inequality $\eqref{ineq1}$ together with the fact that $\e \leq \theta/10$. We similarly use the shadowing of $p^{m}$ along the orbit of $x$ for $cm$ iterates and combine this with the estimates above to obtain for $(b+1)q \leq s \leq (b+c+1)q$,
\begin{align*}
\prod_{j=0}^{s-1}\|A^{r}(f^{jr}(p^{m}))\| \|A^{r}(f^{jr}(p^{m}))^{-1}\| &\leq \exp(2LC + brq\e + \zeta rq )\\
&\cdot \prod_{j= (b+1)q}^{s-1}\|A^{r}(f^{jr+M}(x))\| \|A^{r}(f^{jr+M}(x))^{-1}\| \\
&\leq \exp\left(2LC + brq\e + \zeta rq + r\left(s-(b+1)q\right)(\xi+\e)\right) \\
&\leq \exp\left(2LC +s(\theta - \e) \right)
\end{align*}
To obtain the final inequality above we use the fact that $(b+1)q \leq s \leq (b+c+1)q$ to write 
\begin{align*}
 brq\e + \zeta rq + r\left(s-(b+1)q\right)(\xi+\e)&= rs\left(\frac{bq}{s}\e + \zeta\frac{q}{s} +\left(1-\frac{(b+1)q}{s}\right)(\xi+\e)\right) \\
 &\leq rs\left(\frac{b}{b+1}\e + \frac{\zeta}{b+1} +\left(1-\frac{b+1}{b+c+1}\right)(\xi+\e)\right),
\end{align*}
and from the last line we use the inequality \eqref{ineq2} together with the fact that $\e \leq \theta/10$ to obtain the desired inequality. 
For $(b+c + 1)q \leq s \leq (b+c +2)q$ we again use the crude bound 
\[
\prod_{j=(b+c+1)q}^{s-1}\|A^{r}(f^{jr}(p^{m}))\| \|A^{r}(f^{jr}(p^{m}))^{-1}\| \leq \exp(\zeta r(s-(b+c+1)q)),
\]
to now obtain (combining also with the above estimates) 
\begin{align*}
\prod_{j=0}^{s-1}\|A^{r}(f^{jr}(p^{m}))\| \|A^{r}(f^{jr}(p^{m}))^{-1}\| &\leq \exp(2LC + brq\e + \zeta rq \\
&+ rcq(\xi+\e) + \zeta r(s-(b+c+1)q)),
\end{align*}
We must prove again that the quantity in the exponent on the right is bounded above by $2LC + rs(\theta - \e)$. This involves an application of the inequality \eqref{ineq3} in a manner similar to the previous verification. Using $(b+c + 1)q \leq s \leq (b+c +2)q$ we have 
\begin{align*}
 brq\e + \zeta rq &+ rcq(\xi+\e) + \zeta r(s-(b+c+1)q) \\
 &= rs\left(\frac{bq}{s}\e + \zeta\frac{q}{s} +\frac{cq}{s}(\xi+\e) + \zeta \left(1-\frac{(b+c+1)q}{s}\right) \right) \\
 &\leq \left(\frac{b}{b+ c + 1}\e + \frac{\zeta}{b+ c + 1} +\frac{c}{b+c+1}(\xi+\e) +  \zeta \left(1-\frac{b+c+1}{b+c+2}\right) \right)
\end{align*}
The desired bound then comes from combining the inequality \eqref{ineq3} with the fact that $\e \leq \theta/10$. Finally for $(b+c+2)q \leq s \leq (2b+c+2)q$ we use the approximation by $y$ again and combine this with the previous estimates to obtain 
\begin{align*}
\prod_{j = 0}^{s-1}\|A^{r}(f^{jr}(p^{m}))\| &\|A^{r}(f^{jr}(p^{m}))^{-1}\| \\
&\leq \exp(3LC + (b+c+2)q(\theta - \e)  + r(s-(b+c+2)q)\e) \\
&\leq \exp(3LC + rs(\theta - \e) ),
\end{align*}
once again because $\theta > 2\e$. 

Define the integer $v_{m}$ by $v_{m} := \frac{u_{m}}{r}$. Since $f^{u_{m}}(p^{m}) = p^{m}$, we can extend our arguments to $s \geq (2b+c+2)q = v_{m}$ using the periodicity of $p^{m}$: for $s \geq v_{m}$ we write $s = h v_{m} + s'$ for some positive integers $h$ and $s'$. Then using the above arguments,
\begin{align*}
\prod_{j = 0}^{s-1}\|A^{r}(f^{jr}(p^{m}))\| \|A^{r}(f^{jr}(p^{m}))^{-1}\| &= \left(\prod_{j = 0}^{v_{m}-1}\|A^{r}(f^{jr}(p^{m}))\| \|A^{r}(f^{jr}(p^{m}))^{-1}\| \right)^{h} \\
&\cdot \prod_{j = 0}^{s'-1}\|A^{r}(f^{jr}(p^{m}))\| \|A^{r}(f^{jr}(p^{m}))^{-1}\| \\
&\leq \exp(3LC + rs(\theta-\e))
\end{align*}
Hence we have succeeded in showing that for every $s \geq 1$, 
\[
\prod_{j = 0}^{s-1}\|A^{r}(f^{jr}(p^{m}))\| \|A^{r}(f^{jr}(p^{m}))^{-1}\| \leq \exp(3LC + rs(\theta-\e)).
\]
We use one final trick: let $N = rt$ for some integer $t$ chosen large enough that $N\e > 3LC$. Then it's easy to see that as a consequence of the above inequality we also have 
\[
\prod_{j = 0}^{s-1}\|A^{N}(f^{jN}(p^{m}))\| \|A^{N}(f^{jN}(p^{m}))^{-1}\| \leq \exp(3LC + Ns(\theta-\e)),
\]
for every $s \geq 1$. But we chose $N$ such that  
\[
3LC + Ns(\theta-\e) < N\e + Ns(\theta-\e) \leq Ns\theta
\]
Thus we conclude that $p^{m} \in \mathcal{D}(N,\theta)$ for all large enough $m$. 
\end{proof}

\subsection{Conclusion}\label{conclusion} By Proposition \ref{periodiczero}, $\la_{+}(p) = 0$ for every periodic point $p$ of $f$.  We conclude by \cite[Theorem 1.4]{Kal} that for every $f$-invariant ergodic probability measure $\nu$ on $\Sigma$ we must have $\la_{+}(\mathcal{A},\nu) = -\la_{-}(\mathcal{A},\nu) = 0$. We then apply \cite[Proposition 4.11]{KS} with $\e < \tau$ to obtain a uniform estimate 
\[
\|A^{n}(x)\| \|A^{n}(x)^{-1}\| \leq Ce^{\e |n|}, \; \; \text{for every $x \in \Sigma$, $n \in \Z$},
\] 
for some constant $C \geq 1$. We conclude that there is $N > 0$, $\theta < \tau$ such that $ \mathcal{D}(N,\theta) = \Sigma$. Hence the continuous extension $\hat{\eta}$ of $\eta$ from Lemma \ref{extension} is the desired continuous family of inner products invariant under $\mathcal{A}$. Since $\hat{\eta}$ is invariant under the stable and unstable holonomies $H^{s}$ and $H^{u}$ on $\mathcal{D}(N,\theta) = \Sigma$ and by Proposition \ref{existenceholonomies} these holonomies are uniformly Lipschitz, we conclude that $\hat{\eta}$ is Lipschitz continuous with respect to the distance $\rho$, as desired.

\bibliographystyle{plain}
\bibliography{measurableconformalrigidity}
\end{document}